\newtheorem{theorem}{Theorem}[section]
\newtheorem{lemma}[theorem]{Lemma}
\newtheorem{proposition}[theorem]{Proposition}
\newtheorem{corollary}[theorem]{Corollary}
\theoremstyle{definition}
\newtheorem{definition}{Definition}[section]
\theoremstyle{remark}
\crefname{inequality}{Inequality}{Inequalities}
\DeclareMathOperator*{\E}{\mathbb{E}}
\title{Furstenberg--S\'{a}rk\"{o}zy theorem over number fields}
\author{Dev Ranjan Pandey}
\address{Department of Mathematics, Indian Institute of Science 
Education and Research Bhopal, Bhopal Bypass Road, Bhauri, Bhopal 462066, Madhya Pradesh, India}
\email{dev23@iiserb.ac.in}
\author{Jyoti Prakash Saha}
\address{Department of Mathematics, Indian Institute of Science 
Education and Research Bhopal, Bhopal Bypass Road, Bhauri, Bhopal 462066, Madhya Pradesh, India}
\email{jpsaha@iiserb.ac.in}
\subjclass{11B30, 05B10}
\keywords{Furstenberg--S\'{a}rk\"{o}zy theorem, number fields, 
intersective polynomials}
\begin{document}

\begin{abstract}
We introduce the notion of intersective polynomials having coefficients in 
the ring of integers $\mathcal{O}_K$ of a number field $K$, 
and define a notion of upper density of subsets of $\mathcal{O}_K$. 
We prove that given any intersective polynomial $p(x)$ over $\mathcal{O}_K$, 
every subset $A$ of $\mathcal{O}_K$ of positive upper density contains 
two distinct elements whose difference is equal to $p(x)$ for some 
element $x$ in $\mathcal{O}_K$.  
Moreover, we obtain a quantitative version of this result.  
The proof is motivated by an argument due to Lucier, 
and the Fourier-free proof of the Furstenberg--S\'{a}rk\"{o}zy theorem 
over the integers by Green, Tao and Ziegler.
\end{abstract}

\maketitle

\section{Introduction}
\label{section:introduction}

\subsection{Motivation}

A conjecture by Lov\'asz states that every subset $A$ of the integers, with 
positive upper density, contains two distinct elements in $A$ such that 
they differ by a perfect square.
In 1977, Furstenberg \cite{Furstenberg} proved this conjecture by 
using ergodic theoretic methods.
Soon after, S\'{a}rk\"{o}zy independently proved a quantitative
and a stronger version of this result using Fourier analytic methods.
\begin{theorem} [S\'ark\"ozy \cite{Sarkozy}]
Let $A$ be a subset of the integers lying in the interval $[1,N]$ such that
the set $A - A$ does not contain any nonzero perfect square.
Then 
\[
  \frac{|A|}{N}
  \ll
  \left(
    \frac{(\log \log N)^2}{\log N} 
  \right)^{1/3}.
\]
\end{theorem}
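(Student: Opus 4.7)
The plan is to run a Fourier-analytic density increment argument in the spirit of S\'{a}rk\"{o}zy's original paper. Set $\delta = |A|/N$ and assume, for contradiction, that $\delta$ is substantially larger than $((\log\log N)^2/\log N)^{1/3}$. First I would embed $[1,N]$ into a cyclic group $\mathbb{Z}/M\mathbb{Z}$ for a prime $M$ of size about $2N$, and consider the counting operator
\[
T(A) := \sum_{1 \leq n \leq \sqrt{N}/2} |A \cap (A - n^2)|.
\]
Since $A - A$ avoids all nonzero squares, $T(A) = 0$, whereas a random set of density $\delta$ would contribute on the order of $\delta^2 N^{3/2}$, leaving a large deficit to exploit.

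Opening this up on the Fourier side of $\mathbb{Z}/M\mathbb{Z}$, I would write
\[
T(A) = \sum_{\xi \in \mathbb{Z}/M\mathbb{Z}} |\widehat{\mathbf{1}_A}(\xi)|^2 \, W(\xi), \qquad W(\xi) := \sum_{1 \leq n \leq \sqrt{N}/2} e(\xi n^2 / M),
\]
so that the frequency $\xi = 0$ contributes a main term of order $\delta^2 N^{3/2}$. Since $T(A) = 0$, some nonzero $\xi$ must satisfy $|\widehat{\mathbf{1}_A}(\xi)|^2 |W(\xi)|$ of the same order, and a Weyl-type estimate for the quadratic exponential sum then forces $\xi/M$ to admit a Diophantine approximation $a/q$ with $q$ at most polynomial in $1/\delta$.

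From this approximation, a standard pigeonholing over residue classes modulo $q$ and over arithmetic progressions of common difference $q$ and length a positive power of $N$ produces a progression $P$ on which $A$ has strictly larger relative density. Rescaling $P$ to $\{1,\ldots,N_1\}$, I would iterate this dichotomy: since the density cannot exceed $1$, the procedure must terminate after a controlled number of steps, at which point $A$ is forced to contain a genuine square difference, giving the desired contradiction. The stated bound arises by balancing the length loss per step, the number of steps needed for the density to blow up, and the polynomial $1/\delta$ dependence appearing in the Weyl bound.

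The main obstacle will be controlling these quantitative dependencies. In particular, one needs a Weyl-type inequality for $W(\xi)$ with sharp trade-offs between the modulus $q$ of approximation and the usable progression length, and one must carefully extract a genuine arithmetic progression from the Bohr-type set produced by the Fourier bump without losing too much mass. It is precisely the interplay among the $\delta$-dependence in the Weyl bound, the length cost per iteration, and the size of the density increment gained that produces the cube root in the final exponent $((\log\log N)^2/\log N)^{1/3}$.
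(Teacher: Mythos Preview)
The paper does not actually prove this theorem; it is only quoted as a classical result of S\'ark\"ozy and serves as motivation for the paper's own work over the Gaussian integers. So there is no ``paper's own proof'' to compare against here.

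That said, your outline is the standard Fourier-analytic density increment argument and is correct in spirit: you count square differences via the exponential sum $W(\xi)=\sum_n e(\xi n^2/M)$, use a Weyl-type bound to localize a large Fourier coefficient near a rational with small denominator $q$, pass to a progression with common difference $q$ (or $q^2$, to preserve the ``no square difference'' property) on which the density increases, and iterate. This is essentially S\'ark\"ozy's original approach, and the exponent $1/3$ does arise from balancing the three competing parameters you name. One point to be careful about: when you rescale the progression $P$ back to an interval, you must ensure that the square-difference-free property is inherited, which is why one typically passes to progressions of common difference $q^2$ rather than $q$; your write-up glosses over this.

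It may be worth noting that the paper's own methodology, applied to its main theorem over $\mathbb{Z}[i]$, is deliberately \emph{Fourier-free}: it follows Green, Tao, and Ziegler in using only the Cauchy--Schwarz inequality and a ``degree lowering'' mechanism (\cref{degree reduction of polynomial}) to pass from a polynomial shift to a linear one, then pigeonholes directly into a sub-lattice with increased density. This buys conceptual simplicity and avoids exponential-sum technology entirely, at the cost of a much weaker quantitative bound (of shape $(\log\log N)^{-\varepsilon}$ rather than a power of $\log N$). Your Fourier approach is stronger quantitatively but is not the route the present paper takes for its own results.
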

In the subsequent years, improved upper bounds have been obtained 
for the density of the subsets $A$ of $\left\{ 1, 2, \dots, N \right\}$
such that the difference set $A - A$ avoids the nonzero perfect squares, 
by Pintz, Steiger, and Szemer\'edi \cite{PSS},
Lucier \cite{Lucier},
Lyall and Magyar \cite{LyallMagyar},
Hamel, Lyall, and Rice \cite{HLR},
Rice \cite{Rice},
Bloom and Maynard \cite{BloomMaynard},
among others. 
Recently, Green and Sawhney \cite{GreenSawhney} have announced the 
best known upper bound, stating that 
for any subset $A$ of $\{1,2,\dots,N\}$, 
containing no two elements whose difference is a nonzero perfect square,
\[
  \frac{|A|}{N}
  \ll
  e^{-c \sqrt{\log N}}
\]
holds, where $c$ is a positive constant.
Green, Tao, and Ziegler \cite{TaoBlog} obtained a Fourier-free proof of 
the Furstenberg--S\'ark\"ozy theorem, using the 
Cauchy--Schwarz inequality. 

In 1994, Balog, Pelik\'an, Pintz, and Szemer\'edi 
\cite{BPPS} proved that for any integer $\kappa \geq 1$,
every subset of the integers with positive upper density
contains two distinct elements such that their difference is the $\kappa$-th power 
of an integer.
More precisely, they proved that if $A$ is a subset of $\{1,2,\dots,N\}$
such that no two of its elements differ by a nonzero perfect $\kappa$-th power, then 
\[
  \frac{|A|}{N}
  \ll_\kappa
  \left(
    \log N
  \right)^{-\frac{1}{4}\log \log \log \log N}.
\]
A work of Kamae and Mend\`es~France \cite{KamaeMendes} shows that
if $p(x)$ is an intersective polynomial over the integers, 
then every subset of the integers with positive upper density contains two 
distinct elements differing by $p(n)$ for some integer $n$.
Lucier proved 
the following quantitative result, 
which implies the above result of Kamae and Mend\`es~France.

\begin{theorem} [Lucier \cite{Lucier}]
  Let $p(x) \in \mathbb{Z}[x]$ be an intersective polynomial 
  of degree $\kappa \geq 2$, 
  and let $A$ be a subset of $\{1,2,\dots,N\}$ such that it does not contain
  two distinct elements with difference equal to $p(n)$ for some integer $n$.
  Then 
  \[
    \frac{|A|}{N}
    \ll_p
    \frac{(\log \log N)^{\mu/(\kappa-1)}}{(\log N)^{1/(\kappa-1)}}
  \]
  holds,
  where $\mu = 3$ when $\kappa = 2$, and $\mu = 2$ when $\kappa \geq 3$.
\end{theorem}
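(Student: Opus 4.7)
The plan is to mount a density increment argument in the spirit of Lucier, with the Fourier-free Cauchy--Schwarz step of Green, Tao, and Ziegler in place of the circle method. Since $p$ is intersective, for each positive integer $M$ there exists an integer $z_M$ with $p(z_M) \equiv 0 \pmod{M}$ (and, with a little more care via Hensel's lemma, modulo suitable powers of $M$). One then replaces $p$ by the family of auxiliary polynomials $p_M(x) = p(Mx + z_M)$, which take values in $M\mathbb{Z}$ and satisfy $p_M(n) \ll_p (M(|n|+1))^d$. This device is what allows one to transfer information between the original problem on $\{1,\dots,N\}$ and arithmetic progressions of common difference $M$.

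Let $A \subseteq \{1,\dots,N\}$ have density $\alpha = |A|/N$, and assume $A - A$ contains no nonzero value of $p$. The heart of the proof is to establish the following dichotomy: either $\alpha$ is already bounded by $(\log\log N)^{\mu/(d-1)}(\log N)^{-1/(d-1)}$, or there is an arithmetic progression $P$ with common difference $M$ and length $\gg N/M^{d}$ on which the relative density of $A$ exceeds $\alpha\bigl(1 + c\alpha^{d-1}\bigr)$ for an absolute constant $c > 0$. The gain $\alpha^{d-1}$ is precisely what produces the eventual $1/(d-1)$ in the exponent. To produce such a $P$, one expands the bilinear form
\[
  \sum_{a \in A}\sum_{n}\mathbf{1}_A\bigl(a + p_M(n)\bigr),
\]
which is essentially forced to vanish by the hypothesis on $A$, and subtracts its ``random-model'' value $\alpha^2 N \cdot |\{n : |p_M(n)| \leq N\}|$. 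Cauchy--Schwarz applied to the resulting deficit, exactly as in the Green--Tao--Ziegler argument, converts this $L^2$ mass of $\mathbf{1}_A - \alpha$ into correlation with the indicator of a coset $x_0 + M\mathbb{Z}$; pigeonholing over the residue classes modulo $M$ then isolates the desired long progression.

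Iterating, the density sequence $\alpha_k$ satisfies $\alpha_{k+1} \geq \alpha_k(1 + c\alpha_k^{d-1})$, so the process must halt after $O(\alpha^{-(d-1)})$ steps, while the length of the ambient interval shrinks by a factor of roughly $M_k^{d}$ at each step. Balancing these competing effects yields the stated estimate; the split $\mu = 3$ for $d = 2$ versus $\mu = 2$ for $d \geq 3$ reflects a slightly wasteful treatment of the linear term that is unavoidable only in the quadratic case. The principal technical obstacle lies in the $L^2$-to-progression step: one must ensure that the deficit is genuinely captured by a single residue class modulo a well-chosen smooth $M$, which rests on sieve-type estimates for $\#\{n \leq X : p(n) \equiv 0 \pmod{m}\}$ and on careful bookkeeping of how the roots $z_M$ vary with $M$ so that the family $\{p_M\}$ remains uniformly controlled throughout the iteration.
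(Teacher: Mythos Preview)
The paper does not prove this statement; it is quoted in the introduction as a result of Lucier, and the paper's own work is the Gaussian-integer analogue in Theorem~1.3, proved by quite different (and quantitatively weaker) means. So there is no ``paper's own proof'' to compare against here.

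That said, your sketch has a genuine gap at the step where you invoke ``Cauchy--Schwarz applied to the resulting deficit, exactly as in the Green--Tao--Ziegler argument'' and then claim a density increment of size $c\alpha^{d}$ (equivalently, $\alpha \mapsto \alpha(1+c\alpha^{d-1})$). The Green--Tao--Ziegler Fourier-free mechanism does not produce an increment with exponent $d-1$: to pass from correlation with shifts by a degree-$d$ polynomial to correlation with a single residue class one must apply Cauchy--Schwarz $d-1$ times (the degree-lowering step), and each application essentially squares the density parameter. The increment one actually obtains is $\alpha \mapsto \alpha + c\alpha^{2^{d-1}+1}$, as in Lemma~3.1 and Proposition~3.2 of the present paper, and the resulting iteration terminates only after $O(\alpha^{-2^{d-1}})$ steps. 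This yields a bound of shape $(\log\log N)^{-\varepsilon}$, precisely what the paper proves in Theorem~1.3, and nowhere near $(\log N)^{-1/(d-1)}$.

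Lucier's actual argument is not Fourier-free: it uses the Hardy--Littlewood circle method, and the major-arc analysis (Weyl sums plus the arithmetic of $p$ modulo small $q$) is what pins the large Fourier coefficient to a rational with small denominator and delivers the sharper increment. If you want the stated bound you cannot substitute Cauchy--Schwarz for that step; the exponential loss is intrinsic to the van der Corput/degree-lowering route. Your description of the auxiliary polynomials $p_M$ and the overall iteration shape is fine, but the ``$L^2$-to-progression'' step as written would not go through.
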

In 2009, Lyall and Magyar \cite{LyallMagyar}, and in 2013,
Hamel, Lyall, and Rice \cite{HLR} 
have obtained better bounds, for the polynomials with constant term zero,
and for the intersective polynomials of degree two, respectively.
In 2019, Rice \cite{Rice} has further improved the bound for the 
intersective polynomials of arbitrary degree.
Bloom and Maynard \cite{BloomMaynard} 
proved that 
any subset $A$ of $\{1, 2, \dots, N\}$ without square differences
satisfy 
\[
  \frac{|A|}{N}
  \ll
  \left(
    \log N
  \right)^{-c \log \log \log N},
\]
where $c$ is a positive absolute constant.
Arala \cite{Arala} has extended this result to 
intersective polynomials of arbitrary degree.

\subsection{Result obtained}

Let $K$ be a number field of degree $d$ over $\mathbb{Q}$, and
$\mathcal{O}_K$ be the ring of integers of $K$.
Fix a $\mathbb{Z}$-basis $\{e_1, e_2, \dots, e_d\}$ of $\mathcal{O}_K$.
For any nonnegative real number $N$, let $S(N)$ denote the set 
\[
  \left\{ \sum_{i=1}^{d} a_i e_i : a_i \in \mathbb{Z} \cap [-N,N] \right\},
\]
and for any subset $A$ of $\mathcal{O}_K$, define the upper density of $A$
in $\mathcal{O}_K$ as
\[
  \overline{\delta}(A) \coloneq
  \limsup_{N \to \infty} 
  \frac{|A \cap S(N)|}{|S(N)|}.
\]

Note that if $p(x) \in \mathbb{Z}[x]$ is an intersective polynomial over 
the integers, then by the pigeonhole principle, one can show that 
the Furstenberg--S\'ark\"ozy theorem over the integers implies that
for every integer $1 \leq j \leq d$,
each subset $A$ of $\mathcal{O}_K$ of positive upper density contains two 
distinct elements such that their difference is $e_j p(n)$ for 
some integer $n$.

Observe that every nonzero ideal of $\mathcal{O}_K$ is of positive
upper density.
If $p(x) \in \mathcal{O}_K[x]$ is a polynomial for which 
there exists a nonzero ideal $\mathfrak{a}$ of $\mathcal{O}_K$ 
which does not contain any nonzero value of $p(x)$, then
the difference set $\mathfrak{a} - \mathfrak{a}$ does not contain
any nonzero value of the polynomial $p(x)$,
although the upper density of $\mathfrak{a}$ in $\mathcal{O}_K$ is positive.
This leads to the definition of the intersective polynomials over 
the ring $\mathcal{O}_K$ (see \cref{definition of intersective polynomial}).
We obtain \cref{Thm:main result}, which establishes 
the Furstenberg--S\'ark\"ozy theorem for the intersective polynomials 
over the ring $\mathcal{O}_K$.

\begin{theorem}
\label[theorem]{Thm:main result}
  Let $q(x) \in \mathcal{O}_K[x]$ be an intersective polynomial of degree 
  $\kappa \geq 2$ and $N$ be a positive integer. 
  Let $A$ be a subset of $S(N)$ which does not contain two 
  distinct elements whose difference is $q(x)$ 
  for some element $x$ of $\mathcal{O}_K$.
  Then the density of $A$ in $S(N)$ satisfies
  \[
    \frac{|A|}{|S(N)|}
    \ll_{q, d, E}
    \frac{1}{(\log \log N)^\varepsilon}
  \]
  where 
  \(\varepsilon = \frac{1}{2^{\kappa - 1} + 1}\).
\end{theorem}

\subsection{Outline of the proof}

Our proof is motivated by 
an argument of Lucier \cite{Lucier} 
used in 
proving the 
Furstenberg--S\'{a}rk\"{o}zy theorem 
for intersective polynomials, 
and the Fourier-free proof of this theorem 
by Green, Tao, and Ziegler \cite{TaoBlog} for the perfect squares.
We provide a brief outline of our proof below.

Let $q(x) \in \mathcal{O}_K[x]$ be an intersective polynomial 
of degree $\kappa \geq 2$ over $\mathcal{O}_K$, 
and let $A$ be a subset of $S(N)$ of density $\delta$.
Suppose the difference set $A - A$ contains no nonzero value of $q(x)$.
Assume that $N \geq N_{\delta, q}$, where $N_{\delta, q}$ 
is a suitable positive integer, and it is large in terms of $\delta$, 
the degree of $K$, and the polynomial $q(x)$.

In \cref{section:density increment}, 
we use the strategy of \cite{TaoBlog}.
Let \(f\) denote the balanced function 
of the subset \(A\) of \(S(N)\).
First we show that \cref{avoidance in difference set implies correlation}
holds, which proves that 
the correlation between 
$f$ and the composition of $f$ 
and a shift map by $q(x)$
is large in terms of $\kappa$ and $\delta$.
In \cref{degree reduction of polynomial}, we prove a degree lowering result,
which can be applied iteratively 
to obtain \cref{Linearization},
which states that 
$f$ has a large correlation 
with the composition of $f$ 
and the shift map by a linear polynomial,
in terms of $\kappa$ and $\delta$. 
Moreover, the leading coefficient of this linear polynomial, 
denoted by $\alpha$, 
is not ``too large''.
Applying the pigeonhole principle, 
we prove \cref{density increment lemma}, 
which shows that for a suitable element
$n$ in $\mathcal{O}_K$, 
the density of $A$ in the set 
$n + \alpha S(N^{1/2\kappa})$ 
is at least $\delta + c \delta^C$, 
for some positive constants $c, C$, 
depending only on $\kappa$.

In \cref{section:intersective polynomials}, 
we define the notion of intersective polynomials over $\mathcal{O}_K$
(see \cref{definition of intersective polynomial}).
A polynomial with coefficients in $\mathcal{O}_K$ is said to be 
\emph{intersective} if it admits a root modulo 
every nonzero element of $\mathcal{O}_K$.
It follows that every intersective polynomial has a root
in the $\mathfrak{p}$-adic completion of $\mathcal{O}_K$
for every nonzero prime ideal $\mathfrak{p}$ of $\mathcal{O}_K$.
For every nonzero prime ideal $\mathfrak{p}$, 
we choose a root $z_{\mathfrak{p}}$ of $q(x)$ in the $\mathfrak{p}$-adic 
completion.
Using the Chinese remainder theorem, we show in 
\cref{existence of r_a} that for any nonzero element
$\alpha$ in $\mathcal{O}_K$, 
if $\mathfrak{a}$ denotes the ideal generated by $\alpha$,
then the polynomial $q(x)$ admits a root 
$r_\alpha$ modulo $\mathfrak{a}$, which can be bounded in terms of $\alpha$.
Moreover, $r_\alpha$ is congruent to $z_{\mathfrak{p}}$ 
modulo $\mathfrak{p}^n$ for every nonzero prime ideal $\mathfrak{p}$
and positive integer $n$ such that $\mathfrak{p}^n$ divides $\mathfrak{a}$.
Following Lucier, 
for every nonzero element $\alpha$ of $\mathcal{O}_K$, 
we define an auxiliary polynomial $q_{\alpha}(x)$ of degree $\kappa$,
whose coefficients admit an upper bound 
in terms of those of $q(x)$ and $\alpha$.
Moreover, the polynomial $q_{\alpha}(x)$ is intersective
as proved in 
\cref{intersectivity of auxiliary polynomial}.

Combining
\cref{density increment lemma} and
\cref{intersectivity of auxiliary polynomial}, 
we prove in 
\cref{Combination of DIL and Lucier}
that 
some subset $A'$ of $S(N^{1/2\kappa})$ 
has density at least $\delta + c \delta^C$, 
and the difference set $A' - A'$ avoids the nonzero values of 
the intersective polynomial $q_{\alpha}(x)$.
We also show that the coefficients of $q_{\alpha}(x)$ 
admit a suitable upper bound, which is not too large in terms of $N$
and the coefficients of $q(x)$.
Next, we provide a proof of 
\cref{Thm:main result}. 
We show that under the assumption $N \geq N_{\delta, q}$, 
the above argument can be iterated enough times 
to arrive at a subset of a set with density larger than $1$.
This implies that $N < N_{\delta, q}$ holds, which yields the 
bound stated in \cref{Thm:main result}.

\section{Preliminaries}
\label{section:preliminaries}

Throughout this article, $K$ denotes
a number field having degree $d$ over $\mathbb{Q}$.
Its ring of integers is denoted by $\mathcal{O}_K$.
Let $\{e_1, e_2, \dots, e_d\}$ denote a \(\mathbb{Z}\)-basis of \(\mathcal{O}_K\).

For a positive real number $N$, we define the set
\[
  S(N) \coloneq 
  \left\{ \sum_{i=1}^{d} a_i e_i : a_i \in \mathbb{Z} \cap [-N,N] \right\}.
\]
Note that $|S(N)| = (2 \left\lfloor N \right\rfloor + 1)^d$, 
and hence, 
\((2N-1)^d \leq |S(N)| \leq (2N+1)^d\) 
holds. 
The density of a subset $A$ of $\mathcal{O}_K$ in $S(N)$ is defined as
\[
  \delta_N (A) \coloneq \frac{|A \cap S(N)|}{|S(N)|},
\]
and the upper density of $A$ is defined as
\[
  \overline{\delta}(A) \coloneq \limsup_{N \to \infty} \delta_N (A).
\]
For a subset $A$ of $S(N)$ of density $\delta$,
we denote the \emph{balanced function} of $A$ by 
$f : \mathcal{O}_K \to \mathbb{R}$, which is defined as
\[
  f(n) \coloneq \mathbf{1}_{A}(n) - \delta \mathbf{1}_{S(N)}(n).
\]

For any element of \(\alpha\) of \(\mathcal{O}_K\), 
let \(M_\alpha\) denote the smallest nonnegative 
integer such that \(\alpha\) lies in \(S(M_\alpha)\).
In other words, if \(\alpha = \sum_{i = 1}^d a_i e_i\), 
where $a_i \in \mathbb{Z}$ for all $1 \leq i \leq d$,
then 
\[
  M_\alpha = \max_{1 \leq i \leq d} |a_i|.
\]
Let \(E\) denote the integer defined by 
\[
  E \coloneq 
  \max_{1 \leq i \leq j \leq d}
  M_{e_{i} e_{j}}
\]
Note that if $x \in S(N_1)$ and $y \in S(N_2)$, then the product $xy$ lies in 
the set $S(d^2 E N_1 N_2)$. 
In other words, 
\begin{equation}
  \label{Eqn: product of finite lattices}
  S(N_1) \cdot S(N_2)
  \subseteq S(d^2 E N_1 N_2)
\end{equation}
holds. 
For any polynomial $p(x) = c_0 + c_1 x + \cdots + c_\kappa x^\kappa$ 
of degree $\kappa$ in $\mathcal{O}_K[x]$,
let \(M_p\) denote the integer defined by 
\[
  M_p 
  \coloneq
  \max_{0 \leq i \leq \kappa} M_{c_i}.
\]

For any nonnegative integer $j$, some subsets $X, H$ of $\mathcal{O}_K$, 
and elements $x \in X$ and $\mathbf{h} = (h_1, \dots, h_j) \in H^j$,
we write
\[
  \sigma_j(x, \mathbf{h}) \coloneq x + h_1 + \dots + h_j.
\] 
Note that if $k$ lies in $H$, 
then for any element $\mathbf{h} = (h_1, \dots, h_j) \in H^j$, 
the element $\mathbf{h'} \coloneq (h_1, \dots, h_j, k)$ lies in $H^{j+1}$, and 
\[
  \sigma_{j+1}(x, \mathbf{h'}) = \sigma_{j}(x, \mathbf{h}) + k
\]
holds for every $x\in X$.

Let us set the constants $C_1 = (\kappa + 1) (\kappa d^2 E)^{\kappa}$, 
$C_2 = 2(2 d^2 E)^\kappa$, 
$C_3 = \kappa 2^\kappa (d^2 E)^{3\kappa d}$
and $C_4 = C_2^{4\kappa^2 d} C_3$
depending on $\kappa$, $d$ and $E$ only.

The following proposition gives some useful properties of the balanced function $f$
of a subset of $S(N)$.

\begin{proposition} \label[proposition]{properties of f}
  Let $N$ be a positive integer and $A$ be a subset of $S(N)$ of density 
  $\delta$.
  Then the balanced function $f$ of $A$ has the following properties.
  \begin{enumerate}
    \item The function $f$ is $1$-\emph{bounded}.
    \item $\E_{n \in S(N)}f(n) = 0$.
    \item $\E_{n \in S(N)}\left\vert f(n) \right\vert^2 \leq \delta$.
  \end{enumerate}
\end{proposition}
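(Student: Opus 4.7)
The plan is to verify each of the three properties directly from the definition $f(n) = \mathbf{1}_A(n) - \delta \mathbf{1}_{[N]}(n)$, using only that $A \subseteq [N]$ and $|A| = \delta N^2$. The statement is elementary, so the main work is routine bookkeeping; there is no real obstacle beyond taking care to separate the domain $[N]$ (where $f$ may be nonzero) from its complement (where $f$ vanishes identically).

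For (1), I would split by cases. Since $A \subseteq [N]$, on the set $A$ the function takes the value $1 - \delta \in [0,1]$, on $[N] \setminus A$ it takes the value $-\delta \in [-1,0]$, and on $\mathbb{Z}[i] \setminus [N]$ it is $0$. In every case $|f(n)| \leq 1$, using that $0 \leq \delta \leq 1$.

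For (2), linearity of expectation together with $\mathbf{1}_{[N]}(n) = 1$ on $[N]$ gives
\[
  \E_{n \in [N]} f(n)
  =
  \E_{n \in [N]} \mathbf{1}_A(n) - \delta \cdot \E_{n \in [N]} \mathbf{1}_{[N]}(n)
  = \delta - \delta = 0,
\]
since the density of $A$ in $[N]$ is exactly $\delta$ by hypothesis.

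For (3), restricting to $[N]$ (where the averaging takes place) we have $f(n) = \mathbf{1}_A(n) - \delta$. Expanding the square and using $\mathbf{1}_A^2 = \mathbf{1}_A$, the expectation becomes $\delta - 2\delta^2 + \delta^2 = \delta(1-\delta)$, which is bounded above by $\delta$, giving the claimed inequality.
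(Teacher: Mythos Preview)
Your proof is correct and follows essentially the same approach as the paper's: case analysis on the values of $f$ for (1), direct computation using $|A| = \delta N^2$ for (2), and expanding the square to get $\delta - \delta^2 \leq \delta$ for (3).
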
 

\begin{proof}
  Given any element $n$ in $\mathcal{O}_K$, note that
  $f(n)$ lies in the set  $\left\{0,1 - \delta, -\delta\right\}$, 
  implying that $f$ is $1$-bounded.
  Also, observe that
  \begin{align*}
    \E_{n \in S(N)}f(n) 
    =
    \frac{1}{|S(N)|} 
    \sum_{n \in S(N)} 
    \left(\mathbf{1}_{A}(n) - \delta \mathbf{1}_{S(N)}(n)\right)
    =
    \frac{1}{|S(N)|}
    \left(\vert A \vert - \delta \cdot |S(N)|\right)
    =
    0
  \end{align*}
  holds.
  This proves the second statement.
  Note that 
  \begin{align*}
    \E_{n \in S(N)}\left\vert f(n) \right\vert^2
    &
    =
    \E_{n \in S(N)}
    \left(\mathbf{1}_{A}(n) - \delta1_{S(N)}(n)\right)^2\\
    &
    =
    \E_{n \in S(N)}
    \left(
      \left(\mathbf{1}_{A}(n)\right)^2 +
      \delta^2 \left(1_{S(N)}(n)\right)^2 -
      2\delta\mathbf{1}_{A}(n) \cdot 1_{S(N)}(n)
    \right)\\
    &
    =
    \frac{1}{|S(N)|}
    \left(
      \left\vert A \right\vert +
      \delta^2 \left\vert S(N) \right\vert -
      2\delta \left\vert A \right\vert
    \right)\\
    &
    =
    \delta + \delta^2 - 2\delta^2\\
    &
    \leq
    \delta
  \end{align*}
  holds, which
  proves the final statement.
\end{proof}

\section{Density increment}
\label{section:density increment}

In this section, our goal is to prove a density increment result, stated in 
\cref{density increment lemma}. It implies that,
if for a sufficiently large integer $N$, a subset $A$ of $S(N)$ has no two distinct 
elements which differ by a value of a fixed  
nonconstant polynomial in $\mathcal{O}_K [x]$ of degree $\kappa$,
then there exists a translation of a dilation of the set
$S(N^{1/2\kappa})$, which is contained in $S(N)$, and
on which the density of the set $A$ is increased suitably.
In the following lemma, we prove that the correlation between 
the balanced function $f$ of a subset $A$ of $S(N)$ and the composition
of $f$ with a shift by a polynomial $p(x)$ is large, provided 
the difference set $A-A$ does not contain any nonzero value of $p(x)$.

\begin{lemma} \label[lemma]{avoidance in difference set implies correlation}
  Let $p(x)$ be a nonconstant polynomial 
  of degree $\kappa$ in $\mathcal{O}_K[x]$, and $\delta_0 \in (0,1]$.  
  Let $N$ be a positive integer and $A$ be a subset of $S(N)$ of density
  $\delta \geq \delta_0$.
  Suppose the difference set $A - A$ does not contain the nonzero values of $p(x)$.
  If 
  \[
    N
    \geq
    \left(
      \frac{2^{d + 5} C_1 M_p}
      {\delta_0^{2}}
    \right)^{2}
  \]
  then given any positive integer $r$ the balanced function $f$ of $A$ satisfies
  \[
    \left\vert 
      \E_{n \in S(N)}
      \E_{x \in S(N^{1/{2\kappa}})}
      \E_{\mathbf{h} \in S(N^{1/4\kappa^{r}})^{\kappa-1}}
      f(n)
      f\left(
        n + p\left(\sigma_{\kappa-1}(x, \mathbf{h})\right)
      \right)
    \right\vert  
    \geq
    \frac{\delta^{2}}{2}.
  \]
\end{lemma}

\begin{proof}
  Let $r$ be a positive integer and
  $N
    \geq
    \left(
      \frac{2^{d + 5} C_1 M_p}
      {\delta_0^{2}}
    \right)^{2}
  $ holds.
  The polynomial $p(x)$ can have at most $\kappa$ many roots in
  $\mathcal{O}_K$,
  and for any element $n$ in $\mathcal{O}_K$,
  there exist  at most $|S(N^{1/4\kappa^{r}})|^{\kappa-1}$ many elements 
  in the set $S(N^{1/2\kappa}) \times S(N^{1/4\kappa^{r}})^{\kappa-1}$
  such that the sum of its components is equal to $n$.
  Therefore, we get that 
  \[
    \E_{n \in S(N)}
    \E_{x \in S(N^{1/2\kappa})}
    \E_{\mathbf{h} \in S(N^{1/4\kappa^{r}})^{\kappa-1}}
    \mathbf{1}_{A}(n)
    \mathbf{1}_{A}\left(n + p \left(\sigma_{\kappa-1}(x, \mathbf{h})\right)\right)
    \leq
    \frac{\kappa}{|S(N^{1/2\kappa})|}.
  \]
  Note that 
  $N \geq \left(\frac{4\kappa}{\delta^2}\right)^{2\kappa}$
  holds, which yields that
  \begin{equation} \label{first essential}
    \E_{n \in S(N)}
    \E_{x \in S(N^{1/2\kappa})}
    \E_{\mathbf{h} \in S(N^{1/4\kappa^{r}})^{\kappa-1}}
    \mathbf{1}_{A}(n)
    \mathbf{1}_{A}\left(n + p \left(\sigma_{\kappa-1}(x, \mathbf{h})\right)\right)
    \leq
    \frac{\delta^2}{4}.
  \end{equation}
  Consider the balanced function $f$ of the subset $A$ of $S(N)$ and observe that
  \begin{equation}
  \label{no structure implies balanced function property}
  \begin{aligned}
    &
    \E_{n \in S(N)}
    \E_{x \in S(N^{1/2\kappa})}
    \E_{\mathbf{h} \in S(N^{1/4\kappa^{r}})^{\kappa-1}}
    f(n)
    f\left(n + p \left(\sigma_{\kappa-1}(x, \mathbf{h})\right)\right)\\    
    =
    &
    \E_{n \in S(N)}
    \E_{x \in S(N^{1/2\kappa})}
    \E_{\mathbf{h} \in S(N^{1/4\kappa^{r}})^{\kappa-1}}
    \mathbf{1}_{A}(n)
    \mathbf{1}_{A}
    \left(n + p \left(\sigma_{\kappa-1}(x, \mathbf{h})\right)\right)\\
    &
    -
    \E_{n \in S(N)}
    \E_{x \in S(N^{1/2\kappa})}
    \E_{\mathbf{h} \in S(N^{1/4\kappa^{r}})^{\kappa-1}}
    \mathbf{1}_{A}(n)
    \delta \mathbf{1}_{S(N)}
    \left(n + p \left(\sigma_{\kappa-1}(x, \mathbf{h})\right)\right)\\
    &
    -
    \E_{n \in S(N)}
    \E_{x \in S(N^{1/2\kappa})}
    \E_{\mathbf{h} \in S(N^{1/4\kappa^{r}})^{\kappa-1}}
    \delta \mathbf{1}_{S(N)}(n)
    \mathbf{1}_{A}
    \left(n + p \left(\sigma_{\kappa-1}(x, \mathbf{h})\right)\right)\\
    &
    +
    \E_{n \in S(N)}
    \E_{x \in S(N^{1/2\kappa})}
    \E_{\mathbf{h} \in S(N^{1/4\kappa^{r}})^{\kappa-1}}
    \delta \mathbf{1}_{S(N)}(n)
    \delta \mathbf{1}_{S(N)}
    \left(n + p \left(\sigma_{\kappa-1}(x, \mathbf{h})\right)\right).
  \end{aligned}
  \end{equation}
  Note that
  \begin{equation} \label{second essential}
    \E_{n \in S(N)}
    \E_{x \in S(N^{1/2\kappa})}
    \E_{\mathbf{h} \in S(N^{1/4\kappa^{r}})^{\kappa-1}}
    \delta \mathbf{1}_{S(N)}(n)
    \delta \mathbf{1}_{S(N)}
    \left(n + p \left(\sigma_{\kappa-1}(x, \mathbf{h})\right)\right)
    \leq
    \delta^2
  \end{equation}
  holds. 
  For every nonnegative integer $j \leq \kappa-1$ and 
  $(x,\mathbf{h})$ in $S(N^{1/2\kappa}) \times S(N^{1/4\kappa^{r}})^{j}$, 
  the polynomial $p(x)$ satisfies
  \[
    p\left(\sigma_{j}(x, \mathbf{h})\right)
    \in 
    S(C_1 M_p N^{1/2}),
  \]
  and consequently, the set $S\left( N - C_1 M_p N^{1/2} \right)$
  is contained in the sets $S(N) + p\left(\sigma_{j}(x, \mathbf{h})\right)$
  and $S(N) - p\left(\sigma_{j}(x, \mathbf{h})\right)$.
  Therefore,
  \begin{align*}
    &
    \E_{n \in S(N)}
    \E_{x \in S(N^{1/2\kappa})}
    \E_{\mathbf{h} \in S(N^{1/4\kappa^{r}})^{\kappa-1}}
    \mathbf{1}_{A}(n)
    \delta \mathbf{1}_{S(N)}
    \left(n + p \left(\sigma_{\kappa-1}(x, \mathbf{h})\right)\right)\\
    \geq
    &
    \frac{\delta}{|S(N)|}
    \E_{x \in S(N^{1/2\kappa})}
    \E_{\mathbf{h} \in S(N^{1/4\kappa^{r}})^{\kappa-1}}
    \sum_{n \in S(N)}
    \mathbf{1}_{A \cap S(N - C_1 M_p N^{1/2})}\left(n\right)\\
    \geq
    &
    \frac{\delta}{|S(N)|}
    \E_{x \in S(N^{1/2\kappa})}
    \E_{\mathbf{h} \in S(N^{1/4\kappa^{r}})^{\kappa-1}}
    \left(
      \left|A\right| - \left|S(N) \setminus
      S\left(N - C_1 M_p N^{1/2} \right)\right|
    \right)\\
    \geq
    &
    \delta^2 -
    \left(
      \frac{|S(N)| - \left|S\left( N - C_1 M_p N^{1/2} \right)\right|}{|S(N)|}
    \right)
  \end{align*}
  holds. 
  Note that 
  $
    N \geq
    \left(
      \frac{2^{d+5} C_1 M_p}{\delta^2}
    \right)^2
  $
  implies that
  \begin{equation} \label{third essential} 
    \E_{n \in S(N)}
    \E_{x \in S(N^{1/2\kappa})}
    \E_{\mathbf{h} \in S(N^{1/4\kappa^{r}})^{\kappa-1}}
    \mathbf{1}_{A}(n)
    \delta \mathbf{1}_{S(N)}
    \left(n + p \left(\sigma_{\kappa-1}(x, \mathbf{h})\right)\right)
    \geq
    \frac{7}{8}\delta^2
  \end{equation}
  holds. Similarly, we deduce that
  \begin{equation} \label{fourth essential}
  \begin{aligned}
    &
    \E_{n \in S(N)}
    \E_{x \in S(N^{1/2\kappa})}
    \E_{\mathbf{h} \in S(N^{1/4\kappa^{r}})^{\kappa-1}}
    \delta \mathbf{1}_{S(N)}(n)
    \mathbf{1}_{A}\left(n + p \left(\sigma_{\kappa-1}(x, \mathbf{h})\right)\right)\\
    =
    &
    \frac{\delta}{|S(N)|}
    \E_{x \in S(N^{1/2\kappa})}
    \E_{\mathbf{h} \in S(N^{1/4\kappa^{r}})^{\kappa-1}}
    \sum_{n \in S(N) + p \left(\sigma_{\kappa-1}(x, \mathbf{h})\right)}
    \mathbf{1}_{A}\left(n\right)\\
    \geq
    &
    \frac{\delta}{|S(N)|}
    \E_{x \in S(N^{1/2\kappa})}
    \E_{\mathbf{h} \in S(N^{1/4\kappa^{r}})^{\kappa-1}}
    \sum_{n \in S(N - C_1 M_p N^{1/2})}
    \mathbf{1}_{A}\left(n\right)\\
    \geq
    &
    \frac{\delta}{|S(N)|}
    \E_{x \in S(N^{1/2\kappa})}
    \E_{\mathbf{h} \in S(N^{1/4\kappa^{r}})^{\kappa-1}}
    \left(
      \left|A\right| - \left|S(N) \setminus
      S\left(N - C_1 M_p N^{1/2} \right)\right|
    \right)\\
    \geq
    &
    \frac{7}{8}\delta^2.
  \end{aligned}
  \end{equation}
  Using 
  \cref{no structure implies balanced function property}, 
  and Inequalities 
  \eqref{first essential}, \eqref{second essential}, 
  \eqref{third essential}, and \eqref{fourth essential},
  we obtain that
  \begin{align*}
    \E_{n \in S(N)}
    \E_{x \in S(N^{1/2\kappa})}
    \E_{\mathbf{h} \in S(N^{1/4\kappa^{r}})^{\kappa-1}}
    f(n)
    f\left(n + p \left(\sigma_{\kappa-1}(x, \mathbf{h})\right)\right)
    \leq
    - \frac{\delta^2}{2}
  \end{align*}
  holds, which implies that
  \begin{equation*}
    \left\vert 
      \E_{n \in S(N)}
      \E_{x \in S(N^{1/2\kappa})}
      \E_{\mathbf{h} \in S(N^{1/4\kappa^{r}})^{\kappa-1}}
      f(n)
      f\left(n + p \left(\sigma_{\kappa-1}(x, \mathbf{h})\right)\right)
    \right\vert
    \geq
    \frac{\delta^2}{2}
  \end{equation*}
  holds. 
  This proves \cref{avoidance in difference set implies correlation}.
\end{proof}

\subsection{Degree reduction}

In the following lemma, we assume for a subset of $S(N)$ that
the correlation between the balanced function $f$ and the 
composition of $f$ with a shift map by a polynomial \(p(x)\) is large. 
With this assumption, we obtain that there is a polynomial of degree one less than 
that of \(p(x)\),
such that the correlation between $f$ and its composition with 
a shift map by this derived polynomial is also large.
The coefficients of the obtained polynomial are bounded in terms of the 
considered polynomial, and the lower bound of the correlation is analogous 
to that of the prior correlation.

\begin{lemma} [Degree lowering]\label[lemma]{degree reduction of polynomial}
  Let $\kappa$, $r$ be positive integers 
  with $\kappa \geq 2$, and $\delta_0\in (0,1]$. 
  Let $1 \leq m \leq \kappa-1$ be an integer and $p(x) \in \mathcal{O}_K[x]$ be 
  a polynomial of degree $\kappa-m+1$. 
  Let $N$ be a positive integer
  and $A$ be a subset of $S(N)$ of density $\delta \geq \delta_0$.
  Assume that the balanced function $f$ of $A$ satisfies
  \[
    \left\vert 
      \E_{n \in S(N)}
      \E_{x \in S(N^{1/{2\kappa}})}
      \E_{\mathbf{h} \in S(N^{1/4\kappa^{r}})^{\kappa-m}}
      f(n)
      f\left(
        n + p\left(\sigma_{\kappa-m}(x, \mathbf{h})\right)
      \right)
    \right\vert  
    \geq
    \frac{\delta^{2^{m-1} + 1}}{2^{3 \cdot 2^{m-1} - 2}}.
  \]
  If $N \geq N_{m+1}$ holds where
  \[
    N_{m+1} =
    \left\lceil
      \left(
        \frac{2^{3 \cdot 2^{\kappa-1} + d} C_1}{\delta_0^{2^{\kappa - 1} + 1}}
      \right)^{4 \kappa^{r}}
    \right\rceil
    M_p^2,
  \]
  then there exists a polynomial $p^{\prime}(x) \in \mathcal{O}_K[x]$
  of degree $\kappa-m$ with 
  \[
    M_{p'} \leq C_2 M_p N^{1/4 \kappa^{r-1}},
  \]
  and the balanced function $f$ of $A$ 
  satisfies
  \[
    \left\vert 
      \E_{n \in S(N)}
      \E_{x \in S(N^{1/{2\kappa}})}
      \E_{\mathbf{h} \in S(N^{1/4\kappa^{r}})^{\kappa-m-1}}
      f(n)
      f\left(
        n + p^{\prime}\left(\sigma_{\kappa-m-1}(x, \mathbf{h})\right)
      \right)
    \right\vert  
    \geq
    \frac{\delta^{2^{m} + 1}}{2^{3 \cdot 2^{m} - 2}}.
  \]
\end{lemma}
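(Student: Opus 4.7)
The plan is to execute a single application of Cauchy--Schwarz that duplicates one of the increment coordinates in $\mathbf{h}$, and then to observe that the resulting ``difference polynomial'' drops in degree by one.

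Concretely, I would split $\mathbf{h} = (\mathbf{h}', h)$ with $\mathbf{h}' \in [N^{1/8d^{r}}]^{d-m-1}$ and $h \in [N^{1/8d^{r}}]$, and write $\sigma_{d-m}(x, \mathbf{h}) = z + h$ where $z := \sigma_{d-m-1}(x, \mathbf{h}')$. Reorganizing the hypothesized bound as
\[
  S \;:=\; \E_n f(n)\,\E_{x,\mathbf{h}'}\E_h f\!\left(n + p(z+h)\right),
\]
I would apply Cauchy--Schwarz in the outer variables $(n, x, \mathbf{h}')$, using \cref{properties of f}(3) to bound $\E_n |f(n)|^2 \leq \delta$. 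This produces $|S|^2 \leq \delta \cdot \E_{h,h^*} \E_{n,x,\mathbf{h}'} f(n + p(z+h)) f(n + p(z+h^*))$. After translating $n \mapsto n - p(z+h^*)$ and absorbing the boundary error from the shift of $[N]$, this rewrites as $\delta \cdot \E_{h,h^*} \E_{n,x,\mathbf{h}'} f(n)\,f\!\left(n + q_{h,h^*}(z)\right)$, where
\[
  q_{h,h^*}(w) \;:=\; p(w+h) - p(w+h^*).
\]
Since the top-degree terms of $p(w+h)$ and $p(w+h^*)$ in $w$ cancel, $q_{h,h^*}$ has degree $d-m$, with leading coefficient $(d-m+1)\,a_{d-m+1}(h - h^*)$, which is nonzero exactly when $h \neq h^*$.

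I would then pigeonhole over $(h, h^*)$. The diagonal $h = h^*$ reduces $q_{h,h}$ to $0$, contributing $\E_n f(n)^2 \leq \delta$ per pair and thus at most $\delta/|H|$ on average. The clause $N_{m+1} \geq \bigl(2^{3 \cdot 2^{m}-1}/\delta_0^{2^{m}+1}\bigr)^{4d^{r}}$ forces $|H| = \lfloor N^{1/8d^r}\rfloor^2$ to be large enough that this diagonal share is strictly smaller than $|S|^2/(2\delta) \gtrsim \delta^{2^m+1}/2^{3\cdot 2^m - 3}$ coming from the hypothesis on $|S|$. Hence some off-diagonal pair $(h_1^*, h_1^{**})$ with $h_1^* \neq h_1^{**}$ satisfies
\[
  \left|\E_{n,x,\mathbf{h}'} f(n)\,f\!\left(n + q_{h_1^*, h_1^{**}}\!\left(\sigma_{d-m-1}(x,\mathbf{h}')\right)\right)\right| \;\geq\; \frac{\delta^{2^{m}+1}}{2^{3\cdot 2^{m} - 2}},
\]
and I set $p'(w) := q_{h_1^*, h_1^{**}}(w)$. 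Its degree is exactly $d-m$, and the bound $M_{p'} \leq 2^{2d+1} M_p N^{1/8d^{r-1}}$ follows from a direct binomial expansion using $|h_1^*|,|h_1^{**}| \leq \sqrt{2}\,N^{1/8d^r}$, the bound $\binom{j}{k} \leq 2^d$, and $j - k \leq d$.

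The main obstacle I anticipate is controlling the boundary error introduced by the translation $n \mapsto n - p(z+h^*)$: because $f$ is supported on the square $[N]$, the sets $[N]$ and $[N] + p(z+h^*)$ disagree on $O(N\,|p(z+h^*)|)$ Gaussian integers, producing a correction of order $|p(z+h^*)|/N$ in each $(h,h^*)$-summand. Since $x \in D \subseteq [2N^{1/2d}]$ and the increments in $\mathbf{h}'$ are much smaller, $|z+h^*| \ll N^{1/2d}$, so \cref{non vanishing domain of the polynomial} (more precisely the underlying estimate on polynomial values) gives $|p(z+h^*)| \ll (d+1) M_p N^{(d-m+1)/2d} \leq M_p (2d+2)^{2d+2} N^{1/2}$. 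The remaining clause $N \geq \bigl(2^{3\cdot 2^{m}}(2d+2)^{2d+2} M_p^2/\delta_0^{2^{m}+1}\bigr)^2$ is arranged precisely so that this boundary error is strictly smaller than the target $\delta^{2^{m}+1}/2^{3\cdot 2^{m}-2}$, closing the argument.
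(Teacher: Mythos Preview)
Your proposal is correct and follows essentially the same route as the paper: split off one coordinate of $\mathbf{h}$, apply Cauchy--Schwarz using $\E_n |f(n)|^2 \leq \delta$, expand the resulting square into pairs $(h,h^*)$, discard the diagonal using the first largeness condition on $N$, translate in $n$ and absorb the boundary error using the second largeness condition, pigeonhole to a fixed pair, and read off $p'$ as the finite-difference polynomial $p(\cdot+h)-p(\cdot+h^*)$ with the binomial coefficient bound. The only cosmetic differences are that the paper applies Cauchy--Schwarz over $(n,x)$ and then Jensen over $\mathbf{h}'$ (equivalent to your single Cauchy--Schwarz over $(n,x,\mathbf{h}')$), and that the paper pigeonholes to a fixed $(k,k')$ \emph{before} performing the translation in $n$, whereas you translate first and then pigeonhole; neither change affects the argument or the constants.
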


\begin{proof}
  Let the polynomial $p(x)$ be given by 
  $c_0 + c_1 x + \dots + c_{\kappa-m+1} x^{\kappa-m+1}$. 
  Assume that \(N \geq N_{m + 1}\).
  We have 
  \begin{equation*} 
    \left\vert 
      \E_{n \in S(N)}
      \E_{x \in S(N^{1/{2\kappa}})}
      \E_{\mathbf{h} \in S(N^{1/4\kappa^{r}})^{\kappa-m}}
      f(n)
      f\left(
        n + p\left(\sigma_{\kappa-m}(x, \mathbf{h})\right)
      \right)
    \right\vert  
    \geq
    \frac{\delta^{2^{m-1} + 1}}{2^{3 \cdot 2^{m-1} - 2}}.
  \end{equation*}
  Denoting $\E_{n \in S(N)}\E_{x \in S(N^{1/{2\kappa}})}$
  by $\E_{n,x}$, and using the triangle inequality, we get that
  \begin{align*}
    \E_{n,x}
    \left(
      \left\vert f(n)\right\vert
      \cdot
      \left\vert 
        \E_{\mathbf{h}  \in S(N^{1/4\kappa^{r}})^{\kappa-m}}
        f\left(
          n + p\left(\sigma_{\kappa-m}(x, \mathbf{h})\right)
        \right)
      \right\vert
    \right)
    \geq
    \frac{\delta^{2^{m-1} + 1}}{2^{3 \cdot 2^{m-1} - 2}}.
  \end{align*}
  Applying the Cauchy--Schwarz inequality and using 
  \cref{properties of f}, the above inequality gives that
  \begin{equation*} 
    \E_{n,x}
    \left\vert 
      \E_{\mathbf{h}  \in S(N^{1/4\kappa^{r}})^{\kappa-m}}
      f\left(
        n + p\left(\sigma_{\kappa-m}(x, \mathbf{h})\right)
      \right)
    \right\vert^2
    \geq
    \frac{\delta^{2^{m} + 1}}{2^{3 \cdot 2^{m} - 4}},
  \end{equation*}
  which implies that
  \begin{equation} \label{reduction of sigma}
  \begin{aligned}
    \frac{\delta^{2^{m} + 1}}{2^{3 \cdot 2^{m} - 4}}
    &
    \leq
    \E_{n,x}
    \left\vert
      \E_{\mathbf{h}  \in S(N^{1/4\kappa^{r}})^{\kappa-m}} 
      f\left(
        n + p\left(\sigma_{\kappa-m}(x, \mathbf{h})\right)
      \right)
    \right\vert^2\\
    &
    =
    \E_{n,x}
    \left\vert 
      \E_{\mathbf{h}  \in S(N^{1/4\kappa^{r}})^{\kappa-m-1}}
      \E_{k \in S(N^{1/4\kappa^{r}})}
      f 
      \left(
        n +  p\left(\sigma_{\kappa-m-1}(x, \mathbf{h}) + k\right)
      \right)
    \right\vert^2\\
    &
    \leq
    \E_{n,x}
    \E_{\mathbf{h}  \in S(N^{1/4\kappa^{r}})^{\kappa-m-1}}
    \left\vert 
      \E_{k \in S(N^{1/4\kappa^{r}})}
      f 
      \left(
        n +  p\left(\sigma_{\kappa-m-1}(x, \mathbf{h}) + k\right)
      \right)
    \right\vert^2.
  \end{aligned}
  \end{equation}
  Note that given any $n \in S(N)$, $x \in S(N^{1/{2\kappa}})$
  and $\mathbf{h}  \in S(N^{1/4\kappa^{r}})^{\kappa-m-1}$, we have
  \begin{align*}
    &
    \left\vert 
      \sum_{k \in S(N^{1/4\kappa^{r}})}
      f 
      \left(
        n +  p\left(\sigma_{\kappa-m-1}(x, \mathbf{h}) + k\right)
      \right)
    \right\vert^2\\
    =
    &
    \left( \sum_{k \in S(N^{1/4\kappa^{r}})}
    f\left(n +  p\left(\sigma_{\kappa-m-1}(x, \mathbf{h}) + k\right)\right) \right)
    \left( \sum_{k' \in S(N^{1/4\kappa^{r}})}
    f\left(n +  p\left(\sigma_{\kappa-m-1}(x, \mathbf{h}) + k'\right)\right) \right)\\
    =
    &
    \sum_{\substack{
      k \neq k'\\ 
      k,k' \in S(N^{1/4\kappa^{r}}) 
    }}
    f\left(n +  p\left(\sigma_{\kappa-m-1}(x, \mathbf{h}) + k\right)\right)
    f\left(n +  p\left(\sigma_{\kappa-m-1}(x, \mathbf{h}) + k'\right)\right)\\
    &
    +
    \sum_{k \in S(N^{1/4\kappa^{r}})}
    \left(
      f\left(n +  p\left(\sigma_{\kappa-m-1}(x, \mathbf{h}) + k\right)\right)
    \right)^2\\
    \leq
    &
    \sum_{\substack{
      k \neq k'\\ 
      k,k' \in S(N^{1/4\kappa^{r}}) 
    }}
    f\left(n +  p\left(\sigma_{\kappa-m-1}(x, \mathbf{h}) + k\right)\right)
    f\left(n +  p\left(\sigma_{\kappa-m-1}(x, \mathbf{h}) + k'\right)\right)
    +
    \sum_{k \in S(N^{1/4\kappa^{r}})} 1\\ 
    =
    &
    \sum_{\substack{
      k \neq k'\\
      k,k' \in S(N^{1/4\kappa^{r}}) 
    }}
    f\left(n +  p\left(\sigma_{\kappa-m-1}(x, \mathbf{h}) + k\right)\right)
    f\left(n +  p\left(\sigma_{\kappa-m-1}(x, \mathbf{h}) + k'\right)\right)
    +
    |S(N^{1/4\kappa^{r}})|.
  \end{align*}
  Therefore, we obtain that
  \begin{align*}
    \frac{1}{|S(N^{1/4\kappa^{r}})|^2}
    \sum_{\substack{
      k \neq k'\\
      k,k' \in S(N^{1/4\kappa^{r}}) 
    }}
    f\left(n +  p\left(\sigma_{\kappa-m-1}(x, \mathbf{h}) + k\right)\right)
    f\left(n +  p\left(\sigma_{\kappa-m-1}(x, \mathbf{h}) + k'\right)\right)\\
    \geq
    \left\vert 
      \E_{k \in S(N^{1/4\kappa^{r}})}
      f 
      \left(
        n +  p\left(\sigma_{\kappa-m-1}(x, \mathbf{h}) + k\right)
      \right)
    \right\vert^2
    -
    \frac{1}{|S(N^{1/4\kappa^{r}})|}.
  \end{align*}
  Note that 
  $
  N \geq 
  \left(
    \frac{2^{3 \cdot 2^m - 3}}{\delta^{2^m + 1}}
  \right)^{4 \kappa^r / d}
  $
  implies that
  \[
    \frac{1}{|S(N^{1/4\kappa^{r}})|} 
    \leq 
    \frac{\delta^{2^{m} + 1}}{2^{3 \cdot 2^{m} - 3}}.
  \]
  Therefore, taking expectations over $n,x$ and $\mathbf{h}$ in their respective 
  ranges,
  Inequality \eqref{reduction of sigma} implies
  \begin{align*}
    \sum_{\substack{
      k \neq k'\\ 
      k,k' \in S(N^{1/4\kappa^{r}}) 
    }}
    \E_{n,x}
    \E_{\mathbf{h} \in S(N^{1/4\kappa^{r}})^{\kappa-m-1}}
    f\left(n +  p\left(\sigma_{\kappa-m-1}(x, \mathbf{h}) + k\right)\right)
    f\left(n +  p\left(\sigma_{\kappa-m-1}(x, \mathbf{h}) + k'\right)\right)\\
    \geq
    \frac{\delta^{2^{m} + 1}}{2^{3 \cdot 2^{m} - 3}} 
    |S(N^{1/4\kappa^{r}})|^2.
  \end{align*}
  The pigeonhole principle implies that 
  there exist distinct elements $k$ and $k'$ in the set $S(N^{1/4\kappa^{r}})$ 
  satisfying 
  \begin{align*}
    \E_{n,x}
    \E_{\mathbf{h} \in S(N^{1/4\kappa^{r}})^{\kappa-m-1}}
    f\left(n +  p\left(\sigma_{\kappa-m-1}(x, \mathbf{h}) + k\right)\right)
    f\left(n +  p\left(\sigma_{\kappa-m-1}(x, \mathbf{h}) + k'\right)\right)
    \geq
    \frac{\delta^{2^{m} + 1}}{2^{3 \cdot 2^{m} - 3}}.
  \end{align*}  
  Note that for every nonnegative integer $j \leq \kappa-1$ and 
  $(x,\mathbf{h})$ in $S(N^{1/2\kappa}) \times S(N^{1/4\kappa^{r}})^{j}$, 
  the polynomial $p(x)$ satisfies
  \[
    p\left(\sigma_{j}(x, \mathbf{h})\right)
    \in 
    S(C_1 M_p N^{1/2}),
  \]
  and consequently, the sets $S(N)$ and 
  $S(N) + p\left(\sigma_{j}(x,\mathbf{h})\right)$ 
  are contained in the set
  \[
    T_N \coloneq 
    S(N + C_1 M_p N^{1/2}).
  \]
  Note that the function $f$ vanishes outside $S(N)$.
  Thus, for any element $(x,\mathbf{h})$ of the set 
  $S(N^{1/{2\kappa}}) \times S(N^{1/4\kappa^{r}})^{ d - m - 1}$, 
  we observe that
  \begin{align*}
    &
    \E_{n \in S(N)}
    f(n)
    f\left(
      n +  p\left(\sigma_{\kappa-m-1}(x, \mathbf{h}) + k'\right)
      - p\left(\sigma_{\kappa-m-1}(x, \mathbf{h}) + k\right)
    \right)\\
    =
    &
    \frac{1}{|S(N)|}
    \sum_{n \in S(N)}
    f(n)
    f\left(
      n +  p\left(\sigma_{\kappa-m-1}(x, \mathbf{h}) + k'\right)
      - p\left(\sigma_{\kappa-m-1}(x, \mathbf{h}) + k\right)
    \right)\\
    =
    &
    \frac{1}{|S(N)|}
    \sum_{n \in T_N}
    f(n)
    f\left(
      n +  p\left(\sigma_{\kappa-m-1}(x, \mathbf{h}) + k'\right)
      - p\left(\sigma_{\kappa-m-1}(x, \mathbf{h}) + k\right)
    \right)\\
    =
    &
    \frac{1}{|S(N)|}
    \Biggl(
      \sum_{n \in S(N) + p(\sigma_{\kappa-m-1}(x, \mathbf{h}) + k)}
      f(n)
      f\left(
        n +  p\left(\sigma_{\kappa-m-1}(x, \mathbf{h}) + k'\right)
        - p\left(\sigma_{\kappa-m-1}(x, \mathbf{h}) + k\right)
      \right)
      \\
      &
      +
      \sum_{n \in T_N
      \setminus \left(S(N) + p(\sigma_{\kappa-m-1}(x, \mathbf{h}) + k)\right)}
      f(n)
      f\left(
        n +  p\left(\sigma_{\kappa-m-1}(x, \mathbf{h}) + k'\right)
        - p\left(\sigma_{\kappa-m-1}(x, \mathbf{h}) + k\right)
      \right)
    \Biggr)\\
    \geq
    &
    \frac{1}{|S(N)|}
    \sum_{n \in S(N)}
    f\left(n +  p\left(\sigma_{\kappa-m-1}(x, \mathbf{h}) + k\right)\right)
    f\left(n +  p\left(\sigma_{\kappa-m-1}(x, \mathbf{h}) + k'\right)\right)\\
    &
    - 
    \frac{|S(N + C_1 M_p N^{1/2})|-|S(N)|}{|S(N)|}.
  \end{align*}
  Note that
  $
    N \geq 
    \left(
      \frac{2^{3 \cdot 2^m + d} C_1 M_p}
      {\delta^{2^{m} + 1}}
    \right)^2,
  $
  implies that
  \[
    \frac{|S(N + C_1 M_p N^{1/2})|-|S(N)|}{|S(N)|}
    \leq 
    \frac{\delta^{2^{m} + 1}}{2^{3 \cdot 2^{m} - 2}}.
  \]
  Thus, taking expectations over $x$ and $\mathbf{h}$
  in their respective ranges, the above inequality yields 
  \begin{equation} \label{1 degree reduced}
    \E_{n,x}
    \E_{\mathbf{h} \in S(N^{1/4\kappa^{r}})^{d - m - 1}}
    f(n)
    f\left(
      n +  p\left(\sigma_{\kappa-m-1}(x, \mathbf{h}) + k'\right)
      - p\left(\sigma_{\kappa-m-1}(x, \mathbf{h}) + k\right)
    \right)
    \geq
    \frac{\delta^{2^{m} + 1}}{2^{3 \cdot 2^{m} - 2}}.
  \end{equation}

  Using the binomial theorem, we observe that
  \begin{align*}
    &
    p\left(\sigma_{\kappa-m-1}(x, \mathbf{h}) + k'\right) -
    p\left(\sigma_{\kappa-m-1}(x, \mathbf{h}) + k\right)\\
    =
    &
    \sum_{i=0}^{\kappa-m+1} c_i 
    \left(
      \left(\sigma_{\kappa-m-1}(x, \mathbf{h}) + k'\right)^i -
      \left(\sigma_{\kappa-m-1}(x, \mathbf{h}) + k\right)^i
    \right)\\
    =
    &
    \sum_{i=1}^{\kappa-m+1} c_i 
    \left(
      \sum_{j=0}^{i-1}
      {\binom{i}{j}}
      \left(k'^{i-j} - k^{i-j}\right)
      \left(\sigma_{\kappa-m-1}(x, \mathbf{h})\right)^{j}
    \right)\\
    =
    &
    \sum_{j=0}^{\kappa-m} \left(\sigma_{\kappa-m-1}(x, \mathbf{h})\right)^{j}
    \left(
      \sum_{i=j+1}^{\kappa-m+1} c_i 
      {\binom{i}{j}}
      \left(k'^{i-j} - k^{i-j}\right)
    \right).
  \end{align*}
  For every $0 \leq j \leq \kappa-m$, let us denote 
  \[
    b_j
    \coloneq
    \sum_{i=j+1}^{\kappa-m+1} c_i
    {\binom{i}{j}}
    \left(k'^{i-j} - k^{i-j}\right),
  \]
  and note that $b_{\kappa-m}$ is nonzero.
  Note that for every $1 \leq i \leq \kappa-m+1$ and $0 \leq j \leq \kappa-m$,
  we have $c_i \in S(M_p)$ and 
  $k'^{i-j} - k^{i-j} \in S(2 d^{2(\kappa-1)} E^{\kappa-1} N^{1/4\kappa^{r-1}})$.
  Hence, for every $0 \leq j \leq d-m$, we obtain that
  \[
    b_j \in S(C_2 M_p N^{1/4\kappa^{r-1}}).
  \]

  Let us set the polynomial 
  $p^{\prime}(x) \coloneq b_0 + b_1 x + \dots + b_{\kappa-m} x^{\kappa-m}$,
  and observe that $p'(x) \in \mathcal{O}_K[x]$ is a polynomial 
  of degree $\kappa-m$, 
  such that if $N \geq N_{m+1}$, then we have
  \[
    M_{p'} \leq C_2 M_p N^{1/4\kappa^{r-1}},
  \]
  and Inequality \eqref{1 degree reduced}
  implies that
  \begin{equation} \label{(m+1)th iteration}
    \left\vert 
      \E_{n \in S(N)}
      \E_{x \in S(N^{1/{2\kappa}})}
      \E_{\mathbf{h} \in S(N^{1/4\kappa^{r}})^{\kappa-m-1}}
      f(n)
      f\left(
        n + p^{\prime}\left(\sigma_{\kappa-m-1}(x, \mathbf{h})\right)
      \right)
    \right\vert
    \geq
    \frac{\delta^{2^{m} + 1}}{2^{3 \cdot 2^{m} - 2}}
  \end{equation}
  holds. This proves \cref{degree reduction of polynomial}.
\end{proof}

Applying the above result iteratively $\kappa-m$ times to a polynomial 
of degree $\kappa-m+1$ gives the following result.

\begin{proposition} [Linearization]
  \label[proposition]{Linearization}
  Let $\kappa$, $r$ be positive integers with $r \geq 2$, and $\delta_0 \in (0,1]$.
  Let $1 \leq m \leq \kappa$ be an integer and
  $p_m(x) \in \mathcal{O}_K[x]$ be a polynomial of degree $\kappa-m+1$.
  Let $N$ be positive integer
  and $A$ be a subset of $S(N)$ of density $\delta \geq \delta_0$.
  Assume that the balanced function $f$ of $A$ satisfies
  \[
    \left\vert 
      \E_{n \in S(N)}
    \E_{x \in S(N^{1/{2\kappa}})}
    \E_{\mathbf{h} \in S(N^{1/4\kappa^{r}})^{\kappa-m}}
      f(n)
      f\left(
        n + p_m\left(\sigma_{\kappa-m}(x, \mathbf{h})\right)
      \right)
    \right\vert  
    \geq
    \frac{\delta^{2^{m-1} + 1}}{2^{3 \cdot 2^{m-1} - 2}}.
  \]
  If $N \geq N_0$ holds where 
  \[
    N_{0} 
    =
    \left\lceil
      \left(
        \frac{2^{3 \cdot 2^{\kappa-1} + d + 1} C_1 C_2}
        {\delta_0^{2^{\kappa - 1} + 1}}
      \right)^{8\kappa^{r}}
    \right\rceil
    M_{p_m}^4,
  \]
  then there exists a linear polynomial $p_\kappa(x)$ with
  \[
    M_{p_\kappa} \leq C_2^{\kappa} M_{p_m} N^{1/4\kappa^{r-2}},
  \] 
  and the balanced function $f$ of $A$ satisfies 
  \[
    \left\vert 
      \E_{n \in S(N)}
      \E_{x \in S(N^{1/{2\kappa}})}
      f(n)
      f \left(n + p_\kappa(x) \right)
    \right\vert  
    \geq
    \frac{\delta^{2^{\kappa-1} + 1}}{2^{3 \cdot 2^{\kappa-1} - 2}}.
  \]
\end{proposition}

\begin{proof}
  Note that if $m = \kappa$, in particular, if $\kappa = 1$, 
  then we are done. 
  Let us take $\kappa \geq 2$ and $1 \leq m \leq \kappa-1$.
  Assume that $N \geq N_0$ holds. 
  Then
  \begin{equation} \label{mth assertion}
    \left\vert 
      \E_{n \in S(N)}
      \E_{x \in S(N^{1/{2\kappa}})}
      \E_{\mathbf{h} \in S(N^{1/4\kappa^{r}})^{\kappa-m}}
      f(n)
      f\left(
        n + p_m\left(\sigma_{\kappa-m}(x, \mathbf{h})\right)
      \right)
    \right\vert  
    \geq
    \frac{\delta^{2^{m-1} + 1}}{2^{3 \cdot 2^{m-1} - 2}}
  \end{equation}
  holds.

  Since 
  \(
    N \geq
    \left\lceil
      \left(
        \frac{2^{3 \cdot 2^{\kappa-1} + d} C_1}{\delta_0^{2^{\kappa - 1} + 1}}
      \right)^{4 \kappa^{r}}
    \right\rceil
    M_{p_m}^2
  \)
  holds, 
  applying \cref{degree reduction of polynomial}, 
  we obtain a polynomial $p_{m+1}(x) \in \mathcal{O}_K[x]$
  of degree $\kappa-m$ such that 
  \[
    M_{p_{m+1}} \leq C_2 M_{p_m} N^{1/4\kappa^{r-1}}
  \]
  holds and the balanced function $f$ of $A$ 
  satisfies
  \[
    \left\vert 
      \E_{n \in S(N)}
      \E_{x \in S(N^{1/{2\kappa}})}
      \E_{\mathbf{h} \in S(N^{1/4\kappa^{r}})^{\kappa-m-1}}
      f(n)
      f\left(
        n + p_{m+1}\left(\sigma_{\kappa-m-1}(x, \mathbf{h})\right)
      \right)
    \right\vert  
    \geq
    \frac{\delta^{2^{m} + 1}}{2^{3 \cdot 2^{m} - 2}}.
  \]
  We claim that \cref{degree reduction of polynomial} can be applied 
  $\kappa - m$ times iteratively.
  To prove it, 
  let us assume that \cref{degree reduction of polynomial} can be applied 
  $i$ times iteratively,
  for some $1 \leq i < \kappa - m$.
  Therefore, for every $1 \leq j \leq i$, we have 
  $N \geq N_{m+j}$
  where
  \[
    N_{m+j} \coloneq
    \left\lceil
      \left(
        \frac{2^{3 \cdot 2^{\kappa-1} + d} C_1}{\delta_0^{2^{\kappa - 1} + 1}}
      \right)^{4 \kappa^{r}}
    \right\rceil
    M_{p_{m + j - 1}}^2,
  \]
  and there exists a 
  polynomial $p_{m + j}(x)$ of degree $(\kappa - m + 1) - j$ with 
  \[
    M_{p_{m + j}} \leq C_2 M_{p_{m + j - 1}} N^{1/4\kappa^{r - 1}},
  \]
  and the balanced function \(f\) of the subset \(A\) of \(S(N)\) 
  satisfies
  \[
    \left\vert 
      \E_{n \in S(N)}
      \E_{x \in S(N^{1/{2\kappa}})}
      \E_{\mathbf{h} \in S(N^{1/4\kappa^{r}})^{\kappa-m-j}}
      f(n)
      f\left(
        n + p_{m+j}\left(\sigma_{\kappa-m-j}(x, \mathbf{h})\right)
      \right)
    \right\vert   
    \geq
    \frac{\delta^{2^{m+j-1} + 1}}{2^{3 \cdot 2^{m+j-1} - 2}}.
  \]

  We claim that $N \geq N_{m+i+1}$ where 
  \[
    N_{m + i + 1}
    \coloneq
    \left\lceil
      \left(
        \frac{2^{3 \cdot 2^{\kappa-1} + d} C_1}{\delta_0^{2^{\kappa - 1} + 1}}
      \right)^{4 \kappa^{r}}
    \right\rceil
    M_{p_{m+i}}^2.
  \]
  Note that 
  \[
    M_{p_{m + i}} \leq C_2^{i} M_{p_{m}} N^{i/4\kappa^{r - 1}}
    \leq C_2^\kappa M_{p_{m}} N^{1/4\kappa^{r - 2}}
  \]
  holds, which yields that 
  \begin{align*}
    \left\lceil
      \left(
        \frac{2^{3 \cdot 2^{\kappa-1} + d} C_1}{\delta_0^{2^{\kappa - 1} + 1}}
      \right)^{4 \kappa^{r}}
    \right\rceil
    M_{p_{m+i}}^2
    & \leq 
    2
    \left(
      \frac{2^{3 \cdot 2^{\kappa-1} + d} C_1}{\delta_0^{2^{\kappa - 1} + 1}}
    \right)^{4 \kappa^{r}}
    C_2^{2\kappa} M_{p_{m}}^2 N^{1/2\kappa^{r - 2}}
    \\
    & \leq
    \left(
      \frac{2^{3 \cdot 2^{\kappa-1} + d + 1} C_1 C_2}
      {\delta_0^{2^{\kappa - 1} + 1}}
    \right)^{4 \kappa^{r}}
    M_{p_{m}}^2 N^{1/2\kappa^{r - 2}} \\
    & \leq N_0^{1/2} N^{1/2} \\
    & \leq N
  \end{align*}
  holds.
  This gives $N \geq N_{m + i + 1}$, 
  and hence, 
  the claim that \cref{degree reduction of polynomial}
  can be applied iteratively \(\kappa - m\) times follows.
  This yields a linear polynomial $p_\kappa(x) = \alpha x + \beta$ 
  with $\alpha \neq 0$ and 
  \[
    M_{p_\kappa}
    \leq 
    C_2^{\kappa} M_{p_m} N^{1/4\kappa^{r-2}},
  \] 
  such that 
  the balanced function $f$ of the subset $A$ of $S(N)$ 
  satisfies 
  \[
    \left\vert 
      \E_{n \in S(N)}
      \E_{x \in S(N^{1/{2\kappa}})}
      f(n)
      f\left(
        n + \alpha x + \beta
      \right)
    \right\vert  
    \geq
    \frac{\delta^{2^{\kappa-1} + 1}}{2^{3 \cdot 2^{\kappa-1} - 2}}.
  \]
  Hence, \cref{Linearization} is proved.
\end{proof}

The above result for $m=1$ gives the following result, which is more convenient 
to use in the proof of \cref{density increment lemma}.
In particular, the following corollary provides the conclusion after applying
\cref{degree reduction of polynomial} iteratively, $\kappa-1$ times.

\begin{corollary} \label[corollary]{linearization corollary}
  Let $r \geq 2$ be an integer and $\delta_0\in (0,1]$.
  Let $p(x) \in \mathcal{O}_K[x]$ be a nonconstant polynomial of degree $\kappa$.
  Let $N$ be a positive integer
  and $A$ be a subset of $S(N)$ of density $\delta \geq \delta_0$.
  Assume that the balanced function $f$ of $A$ satisfies
  \[
    \left\vert 
      \E_{n \in S(N)}
      \E_{x \in S(N^{1/{2\kappa}})}
      \E_{\mathbf{h} \in S(N^{1/4\kappa^{r}})^{\kappa-1}}
      f(n)
      f\left(
        n + p\left(\sigma_{\kappa-1}(x, \mathbf{h})\right)
      \right)
    \right\vert  
    \geq
    \frac{\delta^{2}}{2}.
  \]
  If $N \geq N_0$ where 
  \[
    N_0
    = 
    \left\lceil
      \left(
        \frac{2^{3 \cdot 2^{\kappa-1} + d + 1} C_1 C_2}
        {\delta_0^{2^{\kappa - 1} + 1}}
      \right)^{8\kappa^{r}}
    \right\rceil
    M_{p}^4,
  \]
  then there exist elements
  $\alpha, \beta$ in \(S( C_2^{\kappa} M_{p} N^{1/4\kappa^{r-2}} )\) 
  satisfying \(\alpha \neq 0\) and 
  \[
    \left\vert 
      \E_{n \in S(N)}
      \E_{x \in S(N^{1/{2\kappa}})}
      f(n)
      f \left(n + \alpha x +\beta \right)
    \right\vert  
    \geq
    \frac{\delta^{2^{\kappa-1} + 1}}{2^{3 \cdot 2^{\kappa-1} - 2}}.
  \]
\end{corollary}

\subsection{Density increment lemma}

We establish the following density increment lemma using the above results.
In this lemma, we consider a set such that its difference set avoids the
nonzero values of a polynomial of degree $\kappa$. 
We apply \cref{avoidance in difference set implies correlation} and
\cref{linearization corollary}, to obtain a lower bound
for the correlation between the balanced function 
$f$ and its composition with a shift by a linear polynomial.
By an application of the pigeonhole principle, we obtain the desired result.

\begin{lemma}[Density increment] \label[lemma]{density increment lemma}
  Let $r \geq 2$ be an integer, $p(x)$ be a nonconstant polynomial 
  of degree $\kappa$ in $\mathcal{O}_K[x]$, and $\delta_0 \in (0,1]$.  
  Let $N$ be a positive integer and $A$ be a subset of $S(N)$ of density
  $\delta \geq \delta_0$.
  Suppose the difference set $A - A$ does not contain the nonzero values of $p(x)$.
  If $N \geq N_{\delta_0}$ where 
  \[
    N_{\delta_0}
    =
    \left\lceil
      \left(
        \frac{2^{3 \cdot 2^{\kappa-1} + d + 5} C_1 C_2}
        {\delta_0^{2^{\kappa - 1} + 1}}
      \right)^{8\kappa^{r}}
    \right\rceil
    M_{p}^4,
  \]
  then there exist elements $\alpha$ in
  $S ( C_2^{\kappa} M_{p} N^{1/4\kappa^{r-2}} )$
  and $n$ in $\mathcal{O}_K$ 
  such that the set $L \coloneq n + \alpha S(N^{1/2\kappa})$ is contained in $S(N)$ and 
  the density of $A$ in $L$
  is at least $\delta + c \delta^C$,
  where 
  $c = 1/2^{3 \cdot 2^{\kappa-1} + 2}$ and $C = 2^{\kappa-1} + 1$.
\end{lemma}

\begin{proof}
  Let $N_{\delta_0}$ be a positive integer as mentioned above
  and $N \geq N_{\delta_0}$ holds.
  Note that 
  $N
    \geq
    \left(
      \frac{2^{d + 5} C_1 M_p}
      {\delta_0^{2}}
    \right)^{2}
  $ holds. Therefore, \cref{avoidance in difference set implies correlation}
  yields that
  \begin{equation} \label{eq:key inequality}
    \left\vert 
      \E_{n \in S(N)}
      \E_{x \in S(N^{1/2\kappa})}
      \E_{\mathbf{h} \in S(N^{1/4\kappa^{r}})^{\kappa-1}}
      f(n)
      f\left(n + p \left(\sigma_{\kappa-1}(x, \mathbf{h})\right)\right)
    \right\vert
    \geq
    \frac{\delta^2}{2}.
  \end{equation}
  Since 
  \[
    N \geq
    \left\lceil
      \left(
        \frac{2^{3 \cdot 2^{\kappa-1} + d + 1} C_1 C_2}
        {\delta_0^{2^{\kappa - 1} + 1}}
      \right)^{8\kappa^{r}}
    \right\rceil
    M_{p}^4
  \]
  holds, \cref{linearization corollary} implies that
  we get a linear polynomial $\alpha x + \beta$ in $\mathcal{O}_K[x]$
  with $\alpha \neq 0$, and 
  $\alpha, \beta$ lying in the set
  $S(C_2^\kappa M_{p} N^{1/4\kappa^{r-2}})$, and satisfying
  \[
    \left\vert 
      \E_{n \in S(N)}
      \E_{x \in S(N^{1/2\kappa})}
      f(n)
      f(n + \alpha x + \beta)
    \right\vert
    \geq
    \frac{\delta^{2^{\kappa-1}+1}}{2^{3 \cdot 2^{\kappa-1} - 2}}.
  \] 
  The triangle inequality implies that 
  \begin{equation} \label{dth assertion}
    \E_{n \in S(N)}
    \left\vert 
      \E_{x \in S(N^{1/2\kappa})}
      f(n)
      f(n + \alpha x + \beta)
    \right\vert
    \geq
    \frac{\delta^{2^{\kappa-1}+1}}{2^{3 \cdot 2^{\kappa-1} - 2}},
  \end{equation}
  Observe that the set
  \[
    U_N \coloneq 
    S(N + C_2^\kappa M_{p} N^{1/4\kappa^{r-2}})
  \]
  contains the sets $S(N)$ and $S(N) - \beta$, and this shows that 
  \begin{align*}
    &
    \E_{n \in S(N)}
    \left| 
      \E_{x \in S(N^{1/2\kappa})}
      f\left(n + \alpha x + \beta\right)
    \right|\\
    =
    &
    \frac{1}{|S(N)|}
    \sum_{n \in S(N)}
    \left\vert 
      \E_{x \in S(N^{1/2\kappa})}
      f(n + \alpha x + \beta)
    \right\vert\\
    \leq
    &
    \frac{1}{|S(N)|}
    \sum_{n \in U_N}
    \left\vert 
      \E_{x \in S(N^{1/2\kappa})}
      f(n + \alpha x + \beta)
    \right\vert\\
    =
    &
    \frac{1}{|S(N)|}
    \sum_{n \in S(N) - \beta}
    \left\vert 
      \E_{x \in S(N^{1/2\kappa})}
      f(n + \alpha x + \beta)
    \right\vert\\
    & \quad
    +
    \frac{1}{|S(N)|}
    \sum_{n \in U_N \setminus (S(N) - \beta)}
    \left\vert 
      \E_{x \in S(N^{1/2\kappa})}
      f(n + \alpha x + \beta)
    \right\vert\\
    \leq
    &
    \frac{1}{|S(N)|}
    \sum_{n \in S(N)}
    \left\vert 
      \E_{x \in S(N^{1/2\kappa})}
      f(n + \alpha x)
    \right\vert
    +
    \frac{1}{|S(N)|}
    \sum_{n \in U_N \setminus (S(N) - \beta)}
    1\\
    \leq
    &
    \E_{n \in S(N)}
    \left\vert 
      \E_{x \in S(N^{1/2\kappa})}
      f\left(n + \alpha x\right)
    \right\vert
    +
    \frac{|S(N + C_2^\kappa M_{p} N^{1/4\kappa^{r-2}})| - |S(N)|}{|S(N)|}
  \end{align*}
  holds. 
  Note that 
  \[
    N \geq
    \left(
      \frac{2^{3 \cdot 2^{\kappa - 1} +d +1} C_2^\kappa M_p}
      {\delta^{2^{\kappa-1} + 1}}
    \right)^2
  \]
  holds, 
  and hence, combining the above estimate with \eqref{dth assertion},
  we obtain that 
  \begin{equation} \label{reduced in linear and free of constant term}
    \E_{n \in S(N)}
    \left\vert 
      \E_{x \in S(N^{1/2\kappa})}
      f(n + \alpha x)
    \right\vert
    \geq
    \frac{\delta^{2^{\kappa-1}+1}}{2^{3 \cdot 2^{\kappa-1} - 1}}
  \end{equation}
  holds. Also, observe that given any element $x$ in $S(N^{1/2\kappa})$,
  the element $\alpha x$ lies in the set $S(d^2 E C_2^\kappa M_{p} N^{3/4})$.
  As a consequence, the set 
  \[
    V_N \coloneq
    S(N + d^2 E C_2^\kappa M_{p} N^{3/4})
  \]
  contains the sets $S(N)$ and $S(N) - \alpha x$.
  This yields that
  \begin{align*}
    &
    \E_{n \in S(N)}
    f\left(n + \alpha x\right)\\
    =
    &
    \frac{1}{|S(N)|}
    \left(
      \sum_{n \in V_N}
      f\left(n + \alpha x\right)
      -
      \sum_{n \in V_N \setminus S(N)}
      f\left(n + \alpha x\right)
    \right)\\
    =
    &
    \frac{1}{|S(N)|}
    \left(
      \sum_{n \in S(N) - \alpha x}
      f\left(n + \alpha x\right)
      +
      \sum_{n \in V_N \setminus (S(N) - \alpha x)}
      f\left(n + \alpha x\right)
      -
      \sum_{n \in V_N \setminus S(N)}
      f\left(n + \alpha x\right)
    \right).
  \end{align*}
  Using \cref{properties of f}, the above yields that
  \begin{align*}
    \left| 
      \E_{n \in S(N)}
      f\left(n + \alpha x\right)
    \right|
    &
    \leq
    \frac{2 \left|V_N \setminus S(N)\right|}{|S(N)|}.
  \end{align*}
  Since 
  \[
    N \geq
    \left(
      \frac{2^{3 \cdot 2^{\kappa - 1} +d +3} d^2 E C_2^\kappa M_p}
      {\delta^{2^{\kappa-1} + 1}}
    \right)^4,
  \]
  therefore, taking expectation 
  over $x$, we obtain that
  \[
    - \frac{\delta^{2^{\kappa-1}+1}}{2^{3 \cdot 2^{\kappa-1}}}
    \leq
    \E_{n \in S(N)}
    \E_{x \in S(N^{1/2\kappa})}
    f(n + \alpha x)
    \leq
    \frac{\delta^{2^{\kappa-1}+1}}{2^{3 \cdot 2^{\kappa-1}}}.
  \]
  Combining this with \eqref{reduced in linear and free of constant term} 
  yields that
  \[
    \frac{\delta^{2^{\kappa-1}+1}}{2^{3 \cdot 2^{\kappa-1} + 1}}
    \leq
    \E_{n \in S(N)}
    \max
    \left\{
      \E_{x \in S(N^{1/2\kappa})}
      f(n + \alpha x), 0
    \right\}.
  \]
  
  Consider the set
  \[
    W_N \coloneq
    S(N - d^2 E C_2^\kappa M_{p} N^{3/4}),
  \]
  and observe that given any $n \in W_N$ and $x \in S(N^{1/2\kappa})$, 
  the element $n + \alpha x$ lies in $S(N)$. Thus, it follows that
  \begin{align*}
  &
    \E_{n \in S(N)}
    \max 
    \left\{
      \E_{x \in S(N^{1/2\kappa})}
      f(n + \alpha x),
      0
    \right\}\\
    &
    \leq
    \frac{1}{|S(N)|}
    \left(
      \sum_{n \in W_N}
      \max 
      \left\{
        \E_{x \in S(N^{1/2\kappa})}
        f(n + \alpha x),
        0
      \right\}
      +
      \sum_{n \in S(N) \setminus W_N}
      1
    \right) \\
    &
    \leq
    \frac{1}{|S(N)|}
    \sum_{n \in W_N}
    \max 
    \left\{
      \E_{x \in S(N^{1/2\kappa})}
      f(n + \alpha x),
      0
    \right\}
    +
    \frac{|S(N) \setminus W_N|}{|S(N)|}.
  \end{align*}
  We have
  \[
    N \geq
    \left(
      \frac{2^{3 \cdot 2^{\kappa - 1} +d +4} d^2 E C_2^\kappa M_p}
      {\delta^{2^{\kappa-1} + 1}}
    \right)^4,
  \]
  and this yields 
  \begin{equation*}
    \frac{1}{|S(N)|}
    \sum_{n \in W_N}
    \max
    \left\{
      \E_{x \in S(N^{1/2\kappa})}
      f(n + \alpha x),
      0
    \right\}
    \geq
    \frac{\delta^{2^{\kappa-1} + 1}}{2^{3 \cdot 2^{\kappa-1} + 2}}
  \end{equation*}
  holds.
  By the pigeonhole principle, we obtain an element $n$ in $W_N$
  such that
  \[
    \E_{x \in S(N^{1/2\kappa})}
    f(n + \alpha x)
    \geq
    \frac{\delta^{2^{\kappa-1}+1}}{2^{3 \cdot 2^{\kappa-1} + 2}}
  \]
  holds. 
  Consider the set
  \[
    L \coloneq \{n + \alpha x : x \in S(N^{1/2\kappa})\} 
    = n + \alpha S(N^{1/2\kappa}),
  \]
  and observe that $L \subseteq S(N)$.
  The above inequality yields that the density of $A$ in $L$ 
  is at least $\delta + c \delta^C$.
  This completes the proof of \cref{density increment lemma}.
\end{proof}

\section{Intersective polynomials}
\label{section:intersective polynomials}

In this section, we define intersective polynomials over the 
ring $\mathcal{O}_K$. 
For a given intersective polynomial $q(x)$ and 
a nonzero element $\alpha$ of $\mathcal{O}_K$, 
we define an auxiliary polynomial $q_{\alpha}(x)$ and 
show that it is an intersective polynomial in $\mathcal{O}_K[x]$.
In \cref{Lucier}, 
we show that if $A$ is a subset of $\mathcal{O}_K$ and 
$A - A$ avoids the 
nonzero values of $q(x)$, then there exists a subset $A'$ of 
$\mathcal{O}_K$, defined in terms of $A$ and $\alpha$, 
such that the difference set 
$A' - A'$ avoids the nonzero values 
of the auxiliary polynomial $q_{\alpha}(x)$.

\begin{definition} \label[definition]{definition of intersective polynomial}
  A polynomial $q(x) \in \mathcal{O}_K[x]$ is said to be \emph{intersective} 
  if for every nonzero element $\alpha$ of $\mathcal{O}_K$, 
  there exists an element $z$ in $\mathcal{O}_K$ 
  such that $\alpha$ divides $q(z)$.
\end{definition}

Note that 
a polynomial in $\mathbb{Z}[x]$ is called an \emph{intersective} polynomial
if it admits a root modulo every positive integer.
Moreover, a polynomial in $\mathbb{Z}[x]$ is intersective
if and only if it has a root in the ring of $p$-adic integers $\mathbb{Z}_p$ 
for every nonzero prime $p$ 
(see \cite[p.~105]{NeukirchANT}, for instance).
Similar to the case of the intersective polynomials over $\mathbb{Z}$, 
it can be proved by a repeated application of the pigeonhole principle that 
a polynomial over $\mathcal{O}_K$ is intersective if and only if 
it admits a root in 
the $\mathfrak{p}$-adic completion $\widehat{\mathcal{O}}_{K,\mathfrak{p}}$ 
of $\mathcal{O}_K$
for every nonzero prime ideal $\mathfrak{p}$ of $\mathcal{O}_K$.

\begin{definition}
  For a nonzero prime ideal $\mathfrak{p}$ of $\mathcal{O}_K$,
  the \emph{$\mathfrak{p}$-adic completion of $\mathcal{O}_K$}
  is denoted by $\widehat{\mathcal{O}}_{K, \mathfrak{p}}$, 
  and is defined to be
  \[
    \widehat{\mathcal{O}}_{K, \mathfrak{p}} \coloneq
    \varprojlim_{n} \frac{\mathcal{O}_K}{\mathfrak{p}^n},
  \]
  where the transition maps are the projection maps.
\end{definition}

In the following, 
\(q(x)\) denotes an intersective polynomial 
of degree $\kappa$ over the ring $\mathcal{O}_K$, 
\(\alpha\) denotes a 
nonzero element of $\mathcal{O}_K$,
and $\mathfrak{a}$ denotes the ideal of \(\mathcal{O}_K\) generated by $\alpha$.
For every nonzero prime ideal $\mathfrak{p}$ of $\mathcal{O}_K$, we
fix a root $z_{\mathfrak{p}}\coloneq \{s_{\mathfrak{p}^n}\}_{n \geq 1}$ of $q(x)$ 
in $\widehat{\mathcal{O}}_{K, \mathfrak{p}}$. 

Write $\alpha = a_1 e_1 + \dots + a_{d} e_{d}$, 
and for every $1 \leq i \leq d$ and $1 \leq j \leq d$,
write 
\[
  e_i e_j = \sum_{k=1}^{d} c_{i j k} e_k,
\]
where $a_i$ and $c_{ijk}$ are integers.
Consider the map $T_{\alpha} \colon \mathcal{O}_K \to \mathcal{O}_K$
defined as $T_{\alpha}(x) = \alpha x$ for every $x$ in $\mathcal{O}_K$,
and note that $T_{\alpha}$ is a $\mathbb{Z}$-linear endomorphism on 
$\mathcal{O}_K$.
The matrix corresponding to $T_{\alpha}$
with respect to the basis $\{e_1, \dots, e_{d}\}$ is given by
\[
  \mathcal{A} = 
  \left[
    a_{j k}
  \right]_{d \times d},
  \quad \text{where} \quad
  a_{j k} =
  \sum_{i=1}^{d} a_i c_{i k j}.
\]
Let us consider the characteristic polynomial $\chi_{\mathcal{A}}(x)$ 
of the matrix \(\mathcal{A}\) given by 
\[
  \chi_{\mathcal{A}}(x) = \det(x I - \mathcal{A}).
\]
Note that $\chi_{\mathcal{A}}(x)$ has integer coefficients,
which have absolute value
at most $(d^2 E M_{\alpha})^d$.
Let \(f(x)\) denote the polynomial with integer coefficients
satisfying
\[
  \chi_{\mathcal{A}}(x) = x f(x) + (-1)^d \det(\mathcal{A}).
\]
Note that $\chi_{\mathcal{A}}(\alpha) = 0$, and this yields that
\begin{equation} \label{expression of alpha}
  \frac{1}{\alpha}
  =
  (-1)^{d-1} \frac{f(\alpha)}{\det(\mathcal{A})}.
\end{equation}

\begin{proposition} \label[proposition]{existence of r_a}
  There exists an element $r_\alpha$ in the set 
  $S\left((d^2 E M_{\alpha})^{d}\right)$ such that 
  the element $q(r_\alpha)$ lies in $\mathfrak{a}$, and 
  for any prime power ideal $\mathfrak{p}^n$ of $\mathcal{O}_K$ 
  containing $\mathfrak{a}$, the element $r_\alpha$ 
  goes to $s_{\mathfrak{p}^n}$ 
  under the projection map from $\mathcal{O}_K$
  to $\mathcal{O}_K / \mathfrak{p}^n$.
\end{proposition}

\begin{proof}
  Let 
  \(
    \mathfrak{p}_1^{v_1} \dots \mathfrak{p}_s^{v_s}
  \)
  denote the factorization of $\mathfrak{a}$ into the product of 
  powers of distinct prime ideals.
  By the Chinese remainder theorem, there exists an element $\tilde r_{\alpha}$ in 
  $\mathcal{O}_K$ which goes to $s_{\mathfrak{p}_i^{v_i}}$ under 
  the projection map from $\mathcal{O}_K$ to 
  $\mathcal{O}_K / \mathfrak{p}_i^{v_i}$
  for every $1 \leq i \leq s$.
  It follows that 
  $q(\tilde r_{\alpha})$ lies in $\mathfrak{a}$. 
  Since $s_{\mathfrak{p}_i^{m}} \equiv s_{\mathfrak{p}_i^{m+1}} 
  \pmod{\mathfrak{p}_i^{m}}$ holds for every $m \geq 1$, we get that
  \[
    \tilde r_{\alpha} \equiv s_{\mathfrak{p}_{i}^{n}} \pmod{\mathfrak{p}_{i}^{n}}
  \]
  holds for every $1 \leq n \leq v_i$ and $1 \leq i \leq s$.
  
  Note that $\det(\mathcal{A})$ is a nonzero integer.
  Using \(\mathcal{A} \cdot \mathrm{adj}(\mathcal{A}) = \det(\mathcal{A})I_d\), 
  it follows that given any vector $v$ in $\mathbb{Z}^{d}$,
  there exists a vector $u = (u_1, \dots, u_{d})^t$ in
  $\mathbb{Z}^{d}$ satisfying $0 \leq u_i < |\det(\mathcal{A})|$ for every
  $1 \leq i \leq d$, such that 
  $u + \mathcal{A} \mathbb{Z}^{d} = v + \mathcal{A} \mathbb{Z}^{d}$ holds.
  This implies that there exists an element 
  \(r_\alpha\) 
  in \(\mathcal{O}_K\) 
  such that 
  \(r_\alpha \equiv \tilde r_\alpha \pmod{\alpha}\) 
  and \(r_\alpha\) lies in \(S(\det(\mathcal{A}))\).
  This completes the proof. 
\end{proof}

We follow the strategy of Lucier discussed in \cite{Lucier}, to construct 
an auxiliary polynomial. 
We state the analogous properties for this auxiliary polynomial 
and give a proof for the sake of completeness.

Let $r_{\alpha}$ be an element
of $S((d^2 E M_{\alpha})^{d})$
satisfying the properties as in \cref{existence of r_a}.
It follows that $q(r_{\alpha} + \alpha x)$ is a polynomial 
with coefficients in $\mathfrak{a}$.
Define the polynomial $q_{\alpha}(x) \in \mathcal{O}_K[x]$ as
\[
  q_{\alpha}(x)
  =
  \frac{q(r_{\alpha} + \alpha x)}{\alpha}.
\]

\begin{lemma} \label[lemma]{intersectivity of auxiliary polynomial}
  The polynomial $q_{\alpha}(x)$ is an intersective polynomial 
  of degree $\kappa$, and satisfies
  \[
    M_{q_\alpha} \leq 
    C_3 M_\alpha^{4d \kappa} M_q.
  \]
\end{lemma}

\begin{proof}
  Let $\mathfrak{p}$ be a nonzero prime ideal of $\mathcal{O}_K$.
  Denote the localization of 
  $\mathcal{O}_K$ at $\mathfrak{p}$
  by $\mathcal{O}_{K,\mathfrak{p}}$, 
  and let $\varpi$ be a uniformizer for $\mathfrak{p} \mathcal{O}_{K,\mathfrak{p}}$.
  First, let us consider the case when $\alpha$ does not lie in $\mathfrak{p}$.
  Note that $\mathfrak{p}^n + \mathfrak{a} = \mathcal{O}_K$ holds for every
  positive integer $n$.
  It follows that $\alpha$ is a unit of $\widehat{\mathcal{O}}_{K,\mathfrak{p}}$.
  Hence, for some element $\alpha'$ in $\widehat{\mathcal{O}}_{K,\mathfrak{p}}$, 
  we have $\alpha \alpha' = 1$.
  Note that $\alpha' (z_{\mathfrak{p}} - r_{\alpha})$ is a root 
  of $q_{\alpha}(x)$ in $\widehat{\mathcal{O}}_{K,\mathfrak{p}}$.

  Now, consider the case when $\alpha$ lies in $\mathfrak{p}$.
  Let \(m\) denote the nonnegative integer such that 
  \[
    \alpha \mathcal{O}_{K,\mathfrak{p}} 
    = \mathfrak{p}^m \mathcal{O}_{K,\mathfrak{p}}
  \]
  holds. Hence, there exists a unit $\beta$ in $\mathcal{O}_{K,\mathfrak{p}}$ 
  such that 
  \[
    \alpha = \varpi^m \beta.
  \]
  Let $\{s_k\}_{k \geq 1}$ be a sequence in $\mathcal{O}_K$ such that
  $s_k$ goes to $s_{\mathfrak{p}^k}$ under the projection map from
  $\mathcal{O}_K$ to $\mathcal{O}_K / \mathfrak{p}^k$ for all $k \geq 1$.
  We claim that there exists a sequence $\left\{ x_n \right\}_{n \geq 1}$ 
  in $\mathcal{O}_{K,\mathfrak{p}}$ satisfying 
  \[
    r_{\alpha} + \alpha x_n 
    = s_{m + n},
    \quad 
    x_{n} \equiv x_{n+1} \pmod{\mathfrak{p}^{n} \mathcal{O}_{K,\mathfrak{p}}}
  \]
  for all $n \geq 1$.
  To prove it, let us 
  fix a positive integer $n$.
  Note that $s_{m+n} \equiv r_{\alpha} \pmod{\mathfrak{p}^m}$, and hence, 
  $s_{m+n} \equiv r_{\alpha} \pmod{\mathfrak{p}^m \mathcal{O}_{K,\mathfrak{p}}}$,
  which shows that there exists an element $v_n$ in $\mathcal{O}_{K,\mathfrak{p}}$ 
  such that $s_{m+n} = r_{\alpha} + v_n \varpi^m$.
  Taking \(x_n = \beta^{-1} v_n\), 
  we obtain 
  \begin{equation} \label{solution to the equation}
    r_{\alpha} + \alpha x_n 
    = r_{\alpha} + v_n \varpi^m
    = s_{m + n}.
  \end{equation}
  This yields a sequence $\left\{ x_n \right\}_{n \geq 1}$ in 
  $\mathcal{O}_{K,\mathfrak{p}}$ satisfying 
  \[
    r_{\alpha} + \alpha x_n 
    = s_{m + n}
  \]
  for all $n \geq 1$.
  The congruence
  \begin{align*}
    s_{m+n} & \equiv s_{m+n+1} 
    \pmod{\mathfrak{p}^{m+n} \mathcal{O}_{K,\mathfrak{p}}},
  \end{align*}
  implies $\alpha x_{n} \equiv \alpha x_{n+1} 
  \pmod{\mathfrak{p}^{m+n} \mathcal{O}_{K,\mathfrak{p}}}$, 
  which gives that 
  \[
    \varpi^m \beta x_{n} \equiv \varpi^m \beta x_{n+1} 
    \pmod{\mathfrak{p}^{m+n} \mathcal{O}_{K,\mathfrak{p}}}.
  \]
  It follows that $x_{n} \equiv x_{n+1} 
  \pmod{\mathfrak{p}^{n} \mathcal{O}_{K,\mathfrak{p}}}$.
  This completes the proof of the Claim.
 
  For any \(n \geq 1\), 
  let $\overline{x}_n$ denote the image of $x_n$ under the 
  composition of the 
  projection
  map $\mathcal{O}_{K,\mathfrak{p}}
  \to 
  \mathcal{O}_{K,\mathfrak{p}}/\mathfrak{p}^n \mathcal{O}_{K,\mathfrak{p}}$
  and the isomorphism
  \(
    \mathcal{O}_{K,\mathfrak{p}}/\mathfrak{p}^n \mathcal{O}_{K,\mathfrak{p}}
    \simeq 
    \mathcal{O}_K/\mathfrak{p}^n
  \).
  Put $z = (\overline{x}_n)_{n \geq 1}$.
  Since 
  $x_{n} \equiv x_{n+1} \pmod{\mathfrak{p}^{n} \mathcal{O}_{K,\mathfrak{p}}}$ 
  holds, 
  it follows that $\overline{x}_{n+1}$ goes to $\overline{x}_n$ 
  under the projection map from $\mathcal{O}_K / \mathfrak{p}^{n+1}$
  to $\mathcal{O}_K / \mathfrak{p}^n$, for all $n \geq 1$, and hence,
  $z$ is an element of 
  $\widehat{\mathcal{O}}_{K,\mathfrak{p}}$.
  Note that 
  \(
    \alpha  
    q_\alpha(x_n)
    = 
    q(s_{m + n})
  \)
  holds in \(\mathcal{O}_{K, \mathfrak{p}}\).
  Since \(q(s_{\mathfrak{p}^{m + n}})\) is equal to \(0\) 
  in \(\mathcal{O}_K/\mathfrak{p}^{m + n}\),
  it follows that \(q_\alpha(\overline{x}_n)\) is equal to 
  \(0\) in \(\mathcal{O}_K/\mathfrak{p}^n\).
  This shows that $z$ is a root of $q_{\alpha}(x)$ in 
  $\widehat{\mathcal{O}}_{K,\mathfrak{p}}$.
  Consequently, the polynomial $q_{\alpha}(x)$ is an intersective polynomial.

  Write $q(x) = c_0 + c_1 x + \dots + c_\kappa x^\kappa$,
  and hence,
  \[
    q_{\alpha}(x) =
    \frac{1}{\alpha} 
    \sum_{j=0}^{\kappa}
    \left(
      \sum_{i=j}^{\kappa}
      c_i \binom{i}{j} r_{\alpha}^{i-j} \alpha^{j}
    \right)
    x^j.
  \]
  Using \cref{expression of alpha}, we obtain that
  \[
    q_{\alpha}(x) =
    \frac{(-1)^{d-1} f(\alpha)}{\det(A)}
    \sum_{j=0}^{\kappa}
    \left(
      \sum_{i=j}^{\kappa}
      c_i \binom{i}{j} r_{\alpha}^{i-j} \alpha^{j}
    \right)
    x^j.
  \]

  Note that the coefficients of the polynomial $f(x)$ are integers 
  and have absolute value
  at most $(d^2 E M_{\alpha})^d$. 
  Using \(\alpha \in S(M_\alpha)\) and \cref{Eqn: product of finite lattices}, 
  we obtain 
  \begin{align*}
    \alpha^j 
    &\in S(d^{2(j-1)} E^{j-1} M_\alpha^j) 
    \subseteq S(d^{2(\kappa-1)} E^{\kappa-1} M_\alpha^\kappa)
    \text{ for all } 1 \leq j \leq \kappa,
  \end{align*}
  which implies that 
  \begin{align*}
    f(\alpha) 
    &\in 
    S \left(
      \kappa d^{2(\kappa + d -1)} 
      E^{\kappa + d -1} M_\alpha^{\kappa + d}
    \right).
  \end{align*}
  Similarly, for \(0 \leq j \leq i \leq \kappa\),  
  using 
  \[
    r_\alpha \in S((d^2 E M_{\alpha})^{d}),
  \]
  and applying \cref{Eqn: product of finite lattices}, 
  we obtain 
  \begin{align*}
    r_\alpha^{i - j}
    & \in S(d^{2(\kappa + \kappa d -1)} E^{\kappa + \kappa d-1} 
    M_{\alpha}^{\kappa d})
    ,
  \end{align*}
  and combining it with the above yields 
  that 
  \[
    c_i r_\alpha^{i - j} \alpha^j
    \in 
    S
    \left( 
      d^{2(2\kappa + \kappa d)} E^{2\kappa + \kappa d} 
      M_{\alpha}^{\kappa d + \kappa}  
      M_q
    \right)
  \]
  For any \(0 \leq j \leq \kappa\), 
  it follows that 
  \[
    \frac{(-1)^{d-1} f(\alpha)}{\det(A)}
    \left(
      \sum_{i=j}^{\kappa}
      c_i \binom{i}{j} r_{\alpha}^{i-j} \alpha^{j}
    \right)
    \in 
    S
    \left( 
      \kappa 2^\kappa
      d^{2(3\kappa + \kappa d + d)} E^{3\kappa + \kappa d + d} 
      M_{\alpha}^{\kappa d + 2\kappa + d}  
      M_q
    \right).
  \]
  We conclude that the coefficients of \(q_\alpha\) 
  lie in \(S(C_3 M_\alpha^{4d\kappa} M_q)\).
\end{proof}

A consequence of \cite[Lemma~23]{Lucier} is mentioned in 
\cite[Proposition~6.3]{RiceThesis}.
An analog of that for the ring $\mathcal{O}_K$ is as follows.

\begin{lemma} \label[lemma]{Lucier}
  Let $A$ be a subset of $\mathcal{O}_K$ and assume that 
  $
    (A - A)\cap q(\mathcal{O}_K) \subseteq \{0\}.
  $
  Let $n \in \mathcal{O}_K$ and $A'$ be a subset of 
  $\{x \in \mathcal{O}_K \colon n + \alpha x \in A\}$.
  Then
  $
    (A' - A') \cap q_\alpha({\mathcal{O}_K}) \subseteq \{0\}
  $
  holds. 
\end{lemma}
\begin{proof}
  If there were two distinct elements 
  $a_1, a_2$ of $A'$ 
  satisfying
  \[
    a_1 - a_2 = q_\alpha(v)
  \]
  for some element $v \in \mathcal{O}_K$,
  then it would follow that 
  \begin{align*}
    (n + \alpha a_1) - (n + \alpha a_2)
    & = \alpha (a_1 - a_2)\\
    & = \alpha q_\alpha(v)\\
    & = q(r_{\alpha} + \alpha v),
  \end{align*}
  which contradicts the assumption.
  Hence, the result is proved.
\end{proof}

\section{Proof of the main result}
\label{section:proof of main result}

In this section, we give a proof of \cref{Thm:main result}.
For a given nonconstant polynomial $p(x)$ of degree $\kappa$, 
an integer $r \geq 2$, and $\delta \in (0,1]$, 
consider the integer $N_r(\delta, M_p)$ given by
\[
  N_r(\delta, M_p)
  =
  \left\lceil
    \left(
      \frac{2^{3 \cdot 2^{\kappa-1} + d + 5} C_1 C_2}
      {\delta^{2^{\kappa - 1} + 1}}
    \right)^{8\kappa^{r}}
  \right\rceil
  M_{p}^4.
\]
Note that for a given polynomial $p(x)$, if $\delta \geq \delta'$
then $N_r(\delta, M_p) \leq N_r(\delta', M_p)$.

Considering this, and combining \cref{density increment lemma} with \cref{Lucier},
we can state the following result for 
intersective polynomials over $\mathcal{O}_K$.

\begin{corollary} \label[corollary]{Combination of DIL and Lucier}
  Let $q(x)$ be a nonconstant intersective polynomial of degree $\kappa$ in 
  $\mathcal{O}_K[x]$, and $\delta_0 \in (0,1]$. Let $N$ be a positive integer 
  and $A$ be a nonempty subset of $S(N)$ of density $\delta \geq \delta_0$, 
  such that 
  \[
    (A - A) \cap q(\mathcal{O}_K) \subseteq \{0\}.
  \]
  If for some integer $r \geq 2$, we have $N \geq N_r(\delta_0, M_q)$, 
  then there exist an intersective polynomial $q'(x)$ of degree $\kappa$, 
  satisfying
  \[
    M_{q'} \leq 
    C_4
    M_q^{5\kappa d}
    N^{d/\kappa^{r-3}},
  \]
  and a subset $A'$ of $S(N^{1/2\kappa})$ of density at least $\delta + c\delta^C$, 
  such that
  \[
    (A' - A') \cap q'(\mathcal{O}_K) \subseteq \{0\}
  \]
  holds, where $c, C$ denote the constants as in \cref{density increment lemma}.
\end{corollary}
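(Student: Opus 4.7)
The plan is to chain three results established earlier in the paper: the density increment lemma (\cref{general DIL Gaussian}), the auxiliary intersective polynomial construction (\cref{qa is intersective}), and the avoidance-preserving correspondence (\cref{Lucier}). As noted in the paragraph preceding the corollary, $N \geq N_r(\delta_0, M_q) \geq N_{\delta_0}$, so the hypotheses of the density increment lemma are automatically satisfied.

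First, I would apply \cref{general DIL Gaussian} to $q$ and $A$ to obtain a nonzero Gaussian integer $\alpha$ with $|\alpha| \leq 2^{d(2d+1)} M_q N^{1/8d^{r-2}}$. Setting $\mathfrak{a} := (\alpha)$, I would then take $\gamma$ to be a generator of the ideal $\lambda_q(\mathfrak{a})$. To invoke the DIL with this specific $\gamma$ one must confirm that $\gamma \in \alpha \mathbb{Z}[i]$ and $|\gamma| \leq |\alpha|^d$: writing $\mathfrak{a} = \prod_i \mathfrak{p}_i^{e_i}$, one has $\lambda_q(\mathfrak{a}) = \prod_i \mathfrak{p}_i^{e_i m_{\mathfrak{p}_i}}$, and since each multiplicity satisfies $1 \leq m_{\mathfrak{p}_i} \leq d$, we obtain $\lambda_q(\mathfrak{a}) \subseteq \mathfrak{a}$ (so $\gamma \in \alpha\mathbb{Z}[i]$) and $N(\lambda_q(\mathfrak{a})) \leq N(\mathfrak{a})^d = |\alpha|^{2d}$ (so $|\gamma| \leq |\alpha|^d$). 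The DIL then supplies a Gaussian integer $n$ such that $A$ has density at least $\delta + c\delta^C$ on the translate $n + \gamma [N^{1/4d}]$; setting $A' := \{z \in [N^{1/4d}] \colon n + \gamma z \in A\}$ yields a subset of $[N^{1/4d}]$ of the same density.

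Next, \cref{Lucier} applied to $A$, $n$, $\gamma$, and $A'$ gives $(A' - A') \cap I(q_\mathfrak{a}) = \emptyset$, and \cref{qa is intersective} guarantees that $q' := q_\mathfrak{a}$ is intersective of degree $d$ with $M_{q'} \leq 2^{2d} |\alpha|^{d-1} M_q$. Substituting the bound on $|\alpha|$ gives $M_{q'} \leq 2^{2d + d(d-1)(2d+1)} M_q^d N^{(d-1)/8d^{r-2}}$, and the simplifications $2d + d(d-1)(2d+1) = 2d^3 - d^2 + d \leq 2d^3$ and $(d-1)/(8d^{r-2}) \leq 1/(8d^{r-3})$ produce the stated bound. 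The most delicate technical point is ensuring that $\alpha$ is a nonunit, so that $\mathfrak{a}$ is a proper ideal and the construction of $q_\mathfrak{a}$ is valid; this holds in the regime relevant to \cref{Thm:main result} since $d \geq 2$, and the iterative degree-lowering underlying the DIL forces the leading coefficient of the resulting linear polynomial to satisfy $|\alpha| \geq |a_d| \cdot d! \geq 2$.
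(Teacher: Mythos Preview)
Your argument is essentially the same as the paper's, with one notable difference in how you handle the possibility that $\alpha$ is a unit. The paper does not argue that $\alpha$ must be a nonunit; instead it splits into two cases. When $\alpha$ is a unit, the paper takes $\gamma = 1$ and $q'(x) = q(\alpha x)$, observes directly that $q'$ is intersective with $M_{q'} = M_q$, and checks by hand that $(A'-A') \cap I(q') = \emptyset$. When $\alpha$ is a nonunit, the paper proceeds exactly as you do, invoking $\lambda_q(\mathfrak{a})$, \cref{qa is intersective}, and \cref{Lucier}.

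Your alternative---arguing that $|\alpha| \geq |a_d|\cdot d! \geq 2$ so that $\alpha$ is never a unit---is correct for $d \geq 2$, and the computation tracing the leading coefficient through the $d-1$ applications of \cref{degree reduction of polynomial} is valid. However, this argument is not self-contained: it requires opening up the proof of \cref{linearization corollary} rather than using only its statement, and it does not cover $d=1$, which the corollary as stated permits. The paper's case split is black-box and uniform in $d$, at the cost of a few extra lines. Your verification that $\gamma \in \alpha\mathbb{Z}[i]$ and $|\gamma| \leq |\alpha|^d$ via the multiplicities $1 \leq m_{\mathfrak{p}} \leq d$ is a detail the paper leaves implicit, and your bound computation matches the paper's.
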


\begin{proof}
  Let $r \geq 2$ be an integer and assume that $N \geq N_r(\delta_0, M_q)$ holds.
  Applying \cref{density increment lemma} gives that there exist
  elements $n$ and $\alpha$ in $\mathcal{O}_K$ with 
  \begin{equation} \label{bound of alpha}
    M_\alpha \leq 
    C_2^\kappa M_{q} N^{1/4\kappa^{r-2}},
  \end{equation}
  such that the set $L \coloneq n + \alpha S(N^{1/2\kappa})$ is contained in $S(N)$ and 
  the density of $A$ in $L$ is at least $\delta + c \delta^C$,
  for some constants $c,C > 0$ depending only on $\kappa$.
  Let us take
  $A' = \left\{ x \in S(N^{1/2\kappa}) \colon n + \alpha x \in A \right\}$,
  and note that the density of $A'$ in 
  $S(N^{1/2\kappa})$ is at least $\delta + c \delta^C$. 
  Moreover, \cref{Lucier} implies that 
  \[
    (A' - A') \cap q_\alpha(\mathcal{O}_K) \subseteq \{0\}.
  \]

  Write $q'(x) \coloneq q_\alpha(x)$.
  By \cref{intersectivity of auxiliary polynomial}, the polynomial 
  $q'(x)$ is an intersective polynomial of degree $\kappa$,
  and it satisfies
  \[
    M_{q'} \leq 
    C_3 M_\alpha^{4\kappa d} M_q.
  \]
  Using \cref{bound of alpha} in above, we obtain that
  \[
    M_{q'} \leq 
    C_2^{4 \kappa^2 d} C_3
    M_q^{5\kappa d}
    N^{d/\kappa^{r-3}}.
  \]
  Hence, \cref{Combination of DIL and Lucier} is proved.
\end{proof}

Multiple application of the above corollary proves our main result.
We start by assuming that $N$ is larger than a quantity depending on the 
density of the considered set, and reach to a contradiction by obtaining a subset of 
density larger than $1$. This yields that $N$ is not very large,
and consequently, we get the desired bound for the density of the set.

\begin{proof}[Proof of \cref{Thm:main result}]
  Let $c, C$ denote the constants as in 
  \cref{density increment lemma}.
  Let $\delta$ denote the density of $A$.
  Write $\delta_0 = \delta$, and 
  consider the sequence $\{\delta_i\}_{i \geq 0}$ 
  satisfying $\delta_{i+1} = \delta_{i} + c\delta_{i}^C$
  for all \(i \geq 0\).
  Let $t$ denote the smallest positive integer such that $\delta_t > 1$.
  Let us put $q_0(x) = q(x)$, 
  $r = \left\lceil \log_{\kappa} \left(8t (10 d)^{t}\right)\right\rceil + 2t + 2$ 
  and
  \begin{align*}
    N_0 =
    \left(
      \frac{2^{3 \cdot 2^{\kappa-1} + d + 6} C_1 C_2 C_4 M_q}
      {\delta^{2^{\kappa - 1} + 1}}
    \right)^{8t(5\kappa d)^{r + 2t}}.
  \end{align*}
  We claim that $N < N_0$ holds. 
  Let us assume on the contrary that $N \geq N_0$.
  Since $N_0 \geq N_r(\delta_0, M_{q_0})$, 
  consequently, $N \geq N_r(\delta_0, M_{q_0})$.
  Applying \cref{Combination of DIL and Lucier} 
  yields an intersective polynomial $q_1(x)$ of degree $\kappa$
  satisfying
  \[
    M_{q_1} \leq 
    C_4 M_q^{5\kappa d} N^{d/\kappa^{r-3}},
  \]
  and a subset
  $A_1$ of $S(N^{1/2\kappa})$ having density at least $\delta_1$
  such that 
  \[
    (A_1 - A_1) \cap q_1(\mathcal{O}_K) \subseteq \{0\}
  \]
  holds.
  Next, we show by induction that using the assumption $N \geq N_0$, 
  \cref{Combination of DIL and Lucier}
  can be applied $t$ times, to produce a subset of a suitable set, having 
  density larger than $1$, 
  and as a consequence, $N < N_0$ follows.

  Let $1 \leq i \leq t - 1$ be an integer, 
  and assume that \cref{Combination of DIL and Lucier}
  can be applied iteratively $i$ times.
  Therefore, for every \(1 \leq j \leq i\), we have 
  $N^{1/(2\kappa)^{j - 1}} \geq N_r(\delta_{j - 1}, M_{q_{j - 1}})$, and
  there exist an intersective polynomial $q_j(x)$ of degree $\kappa$
  satisfying 
  \begin{equation}
    \label{Eq: Mqj bound}
    M_{q_j} \leq C_4
    M_{q_{j-1}}^{5\kappa d} N^{d/\kappa^{r-3}(2\kappa)^{j - 1}},
  \end{equation}
  and a subset $A_j$ of the set $S(N^{1/(2\kappa)^j})$ 
  of density at least $\delta_j$ with 
  \[
    (A_j - A_j) \cap q_j(\mathcal{O}_K) \subseteq \{0\}.
  \]

  We claim that 
  \begin{equation}
    \label{Eq:Claim for applying induction}
    N^{1/(2\kappa)^i} \geq N_r(\delta_i, M_{q_i})
  \end{equation}
  holds. 
  Using \eqref{Eq: Mqj bound}, 
  we obtain that 
  \begin{align*}
    M_{q_i}
    & \leq 
    C_4^{1 + 5\kappa d + \dots + (5\kappa d)^{i-1}} M_{q}^{(5\kappa d)^{i}} 
    N^{\frac{d}{\kappa^{r-3}}
    \left( 
      \frac{1}{(2\kappa)^{i-1}} + \frac{5\kappa d}{(2\kappa)^{i-2}} + \dots +
      (5\kappa d)^{i-1}
    \right)}\\
    & \leq
    C_4^{t (5\kappa d)^{t}} M_{q}^{(5\kappa d)^{t}} 
    N^{\frac{t (5d)^t}{\kappa^{r-t-2}}}.
  \end{align*}
  Using $N \geq N_0$, the above bound and 
  $r = \left\lceil \log_{\kappa} \left(8t (10 d)^{t}\right)\right\rceil + 2t + 2$, 
  we obtain 
  \begin{align*}
    &
    \left(
      \left\lceil
        \left(
          \frac{2^{3 \cdot 2^{\kappa-1} + d + 5} C_1 C_2}
          {\delta^{2^{\kappa - 1} + 1}}
        \right)^{8\kappa^{r}}
      \right\rceil
      M_{q_i}^4
    \right)^{(2\kappa)^i}\\
    \leq & 
    \left(
      2
      \left(
        \frac{2^{3 \cdot 2^{\kappa-1} + d + 5} C_1 C_2}
        {\delta^{2^{\kappa - 1} + 1}}
      \right)^{8\kappa^{r}}
      C_4^{4t (5\kappa d)^{t}} M_{q}^{4(5\kappa d)^{t}} 
      N^{\frac{4t (5d)^t}{\kappa^{r-t-2}}}
    \right)^{(2\kappa)^t}\\
    \leq &
    \left(
      \frac{2^{3 \cdot 2^{\kappa-1} + d + 6} C_1 C_2 C_4 M_q}
      {\delta^{2^{\kappa - 1} + 1}}
    \right)^{4t(5\kappa d)^{r + 2t}}
    N^{\frac{4t (10 d)^{t}}{\kappa^{r-2t-2}}}\\
    \leq &
    N_0^{1/2}N^{1/2}\\
    \leq & N.
  \end{align*}
  This proves the claim that \eqref{Eq:Claim for applying induction} holds. 
  By induction, it follows that 
  \cref{Combination of DIL and Lucier}
  can be applied $t$ times, and this yields 
  a subset $A_t$ of $[N^{1/(2\kappa)^t}]$ 
  of density greater than $1$, 
  which is impossible. 
  This shows that 
  \begin{align*}
    N 
    &\leq
    \left(
      \frac{2^{3 \cdot 2^{\kappa-1} + d + 6} C_1 C_2 C_4 M_q}
      {\delta^{2^{\kappa - 1} + 1}}
    \right)^{8t(5\kappa d)^{r + 2t}}
  \end{align*}
  holds.
  Let us denote $c = 1/2^{3 \cdot 2^{\kappa-1} + 2}$, $C = 2^{\kappa-1} + 1$ and 
  note that if $n \geq 1/c\delta^C$, then $\delta_n > 1$.
  This implies that 
  \[
    t \leq \frac{1}{c\delta^C}.
  \] 
  Using 
  $r = \left\lceil \log_{\kappa} \left(8t (10 d)^{t}\right)\right\rceil 
  + 2t + 2$, 
  we obtain 
  \[
    r + 2t
    \leq \frac{10 d + 15}{c\delta^C}.
  \]
  This yields that 
  \begin{align*}
    \log \log N 
    & \leq 
    \frac{C_{q, d, E}}{\delta^C}
  \end{align*}
  holds, where $C_{q, d, E}$ is a constant depending on the polynomial $q(x)$,
  $d$ and $E$ only.
  Hence, 
  we obtain that 
  \[
    \delta
    \ll_{q, d, E}
    \frac{1}{\left(\log \log N\right)^{1/C}}
  \]
  holds. This proves \cref{Thm:main result}.
\end{proof}

\section*{Acknowledgements}

The first author is grateful to Saurabh Kumar Shrivastava for 
suggesting the problem and having fruitful discussions, 
and for his constant support during the preparation of this article.
He acknowledges the fellowship from the Council of Scientific and 
Industrial Research (CSIR), India, file no.~09/1020(17125)/2023-EMR-I,
during the Ph.D. program at IISER Bhopal.

\begin{bibdiv}
\begin{biblist}

\bib{Arala}{article}{
      author={Arala, Nuno},
       title={A maximal extension of the {B}loom-{M}aynard bound for sets
  without square differences},
        date={2024},
        ISSN={0208-6573,2080-9433},
     journal={Funct. Approx. Comment. Math.},
      volume={71},
      number={2},
       pages={271\ndash 296},
         url={https://doi.org/10.7169/facm/2158},
      review={\MR{4839657}},
}

\bib{BloomMaynard}{article}{
      author={Bloom, Thomas~F.},
      author={Maynard, James},
       title={A new upper bound for sets with no square differences},
        date={2022},
        ISSN={0010-437X,1570-5846},
     journal={Compos. Math.},
      volume={158},
      number={8},
       pages={1777\ndash 1798},
         url={https://doi.org/10.1112/s0010437x22007679},
      review={\MR{4490931}},
}

\bib{BPPS}{article}{
      author={Balog, A.},
      author={Pelik\'an, J.},
      author={Pintz, J.},
      author={Szemer\'edi, E.},
       title={Difference sets without {$\kappa$}th powers},
        date={1994},
        ISSN={0236-5294,1588-2632},
     journal={Acta Math. Hungar.},
      volume={65},
      number={2},
       pages={165\ndash 187},
         url={https://doi.org/10.1007/BF01874311},
      review={\MR{1278767}},
}

\bib{Furstenberg}{article}{
      author={Furstenberg, Harry},
       title={Ergodic behavior of diagonal measures and a theorem of
  {S}zemer\'edi on arithmetic progressions},
        date={1977},
        ISSN={0021-7670,1565-8538},
     journal={J. Analyse Math.},
      volume={31},
       pages={204\ndash 256},
         url={https://doi.org/10.1007/BF02813304},
      review={\MR{498471}},
}

\bib{GreenSawhney}{misc}{
      author={Green, Ben},
      author={Sawhney, Mehtaab},
       title={New bounds for the {F}urstenberg-{S}\'ark\"ozy theorem},
        date={2025},
        note={Available at \url{https://arxiv.org/abs/2411.17448}},
}

\bib{TaoBlog}{misc}{
      author={Green, Ben},
      author={Tao, Terence},
      author={Ziegler, Tamar},
       title={A {F}ourier-free proof of the {F}urstenberg-{S}\'ark\"ozy
  theorem},
        date={2013},
        note={Available at
  \url{https://terrytao.wordpress.com/2013/02/28/a-fourier-free-proof-of-the-furstenberg-sarkozy-theorem/}},
}

\bib{HLR}{article}{
      author={Hamel, Mariah},
      author={Lyall, Neil},
      author={Rice, Alex},
       title={Improved bounds on {S}\'ark\"ozy's theorem for quadratic
  polynomials},
        date={2013},
        ISSN={1073-7928,1687-0247},
     journal={Int. Math. Res. Not. IMRN},
      number={8},
       pages={1761\ndash 1782},
         url={https://doi.org/10.1093/imrn/rns106},
      review={\MR{3047488}},
}

\bib{KamaeMendes}{article}{
      author={Kamae, T.},
      author={Mend\`es~France, M.},
       title={van der {C}orput's difference theorem},
        date={1978},
        ISSN={0021-2172},
     journal={Israel J. Math.},
      volume={31},
      number={3-4},
       pages={335\ndash 342},
         url={https://doi.org/10.1007/BF02761498},
      review={\MR{516154}},
}

\bib{LyallMagyar}{article}{
      author={Lyall, Neil},
      author={Magyar, \'Akos},
       title={Polynomial configurations in difference sets},
        date={2009},
        ISSN={0022-314X,1096-1658},
     journal={J. Number Theory},
      volume={129},
      number={2},
       pages={439\ndash 450},
         url={https://doi.org/10.1016/j.jnt.2008.05.003},
      review={\MR{2473891}},
}

\bib{Lucier}{article}{
      author={Lucier, Jason},
       title={Intersective sets given by a polynomial},
        date={2006},
        ISSN={0065-1036,1730-6264},
     journal={Acta Arith.},
      volume={123},
      number={1},
       pages={57\ndash 95},
         url={https://doi.org/10.4064/aa123-1-4},
      review={\MR{2232502}},
}

\bib{NeukirchANT}{book}{
      author={Neukirch, J\"urgen},
       title={Algebraic number theory},
      series={Grundlehren der mathematischen Wissenschaften [Fundamental
  Principles of Mathematical Sciences]},
   publisher={Springer-Verlag, Berlin},
        date={1999},
      volume={322},
        ISBN={3-540-65399-6},
         url={https://doi.org/10.1007/978-3-662-03983-0},
        note={Translated from the 1992 German original and with a note by
  Norbert Schappacher, With a foreword by G. Harder},
      review={\MR{1697859}},
}

\bib{PSS}{article}{
      author={Pintz, J\'anos},
      author={Steiger, W.~L.},
      author={Szemer\'edi, Endre},
       title={On sets of natural numbers whose difference set contains no
  squares},
        date={1988},
        ISSN={0024-6107,1469-7750},
     journal={J. London Math. Soc. (2)},
      volume={37},
      number={2},
       pages={219\ndash 231},
         url={https://doi.org/10.1112/jlms/s2-37.2.219},
      review={\MR{928519}},
}

\bib{RiceThesis}{misc}{
      author={Rice, Alex},
       title={Improvements and extensions of two theorems {S}\'ark\"ozy},
        date={2008},
        note={Available at
  \url{https://alexricemath.com/wp-content/uploads/2013/06/AlexThesis.pdf}},
}

\bib{Rice}{article}{
      author={Rice, Alex},
       title={A maximal extension of the best-known bounds for the
  {F}urstenberg-{S}\'ark\"ozy theorem},
        date={2019},
        ISSN={0065-1036,1730-6264},
     journal={Acta Arith.},
      volume={187},
      number={1},
       pages={1\ndash 41},
         url={https://doi.org/10.4064/aa170828-26-8},
      review={\MR{3884220}},
}

\bib{Sarkozy}{article}{
      author={S{\'a}rk\"ozy, A.},
       title={On difference sets of sequences of integers. {I}},
        date={1978},
        ISSN={0001-5954,1588-2632},
     journal={Acta Math. Acad. Sci. Hungar.},
      volume={31},
      number={1-2},
       pages={125\ndash 149},
         url={https://doi.org/10.1007/BF01896079},
      review={\MR{466059}},
}

\end{biblist}
\end{bibdiv}

\end{document}